\newtheorem{lemma}{Lemma}[section]
\newtheorem{theorem}{Theorem}[section]
\newtheorem{definition}{Definition}[section]
\newtheorem{remark}{Remark}[section]
\newtheorem{proof}{Proof}[section]
\begin{document}
\linespread{1}
\begin{frontmatter}
\title{Mixed multifractal densities for quasi-Ahlfors vector-valued measures}
\author{Adel Farhat}
\address{Labo. of Algebra, Number Theory and Nonlinear Analysis LR18ES50, Department of Mathematics, Faculty of Sciences, Monastir 5000, Tunisia.}
\ead{farhatadel1222@yahoo.fr}
\author{Anouar Ben Mabrouk\corauthref{cor1}\thanksref{label1}}
\address{Department of Mathematics, Higher Institute of Applied Mathematics and Computer Science, Street of Assad Ibn Alfourat, 3100 Kairouan, Tunisia.\\
Lab. Algebra, Number Theory and Nonlinear Analysis, UR11ES50, Department of Mathematics, Faculty of Sciences, 5000 Monastir, Tunisia.}
\ead{anouar.benmabrouk@fsm.rnu.tn}
\thanks[label1]{{Department of Mathematics, Faculty of Science, University of Tabuk, Saudi Arabia.}}
\corauth[cor1]{Corresponding author.}
\begin{abstract}
In the present work, some density estimations associated to vector-valued quasi-Ahlfors measures are developed within the mixed multifractal analysis framework. The principle idea reposes on the fact that being quasi-Ahlfors is sufficient to conduct a mixed multifractal analysis for vector-valued measures. In the present work, we introduced a multifractal density for finitely many measures, and showed that such density may be estimated well by means of the mixed multifractal measures. Such estimation induces an exact computation of multifractal spectrum of the vector-valued quasi-Ahlfors measure.
\end{abstract}
\begin{keyword}
Hausdorff measure and dimension, packing measure and dimension, Multifractal densities, Multifractal formalism, H\"olderian Measures, Ahlfors measures, Mixed cases.
\PACS: 28A78, 28A80.
\end{keyword}
\end{frontmatter}
\maketitle
\section{Introduction}
The purpose of the present paper is to develop a multifractal analysis for a special type of densities due to a special class of measures known as quasi-Ahlfors, within the framework of mixed multifractal analysis. The latter is a natural extension of multifractal analysis of single objects such as measures, functions, statistical data, distributions... It is developed quite recently (since 2014) in the pure mathematical point of view. In physics and statistics, it was appearing on different forms but not really and strongly linked to the mathematical theory. See for example \cite{Ganetal}, \cite{Maneveauetal}. In many applications such as clustering topics, each attribute in a data sample may be described by more than one type of measure. This leads researchers to apply measures well adopted for mixed-type data. See for example \cite{Ganetal}.

Mathematically speaking, the idea consists in establishing some control of the multifractal density of some vector-valued measures instead of a single measure as in the classic or original multifractal analysis of measures and to introduce a simultaneous density characteristic of such measures relatively to one of them which is characterized by a quasi-Ahlfors property. This is important as it permits for example to characterize fractal or irregular sets such as Moran ones. The present work will provide a natural extension of \cite{Attiaetal}, \cite{Bmabrouk1}, \cite{Bmabrouk3}, \cite{Cole}, \cite{Cole-Olsen}, \cite{Douzi-Selmi}, \cite{Menceuretal}.

The assumption of being Ahlfors for one of the measures is the essential motivation behind the present work, where we aim at a first step to review some existing works that have forgotten such assumption and developed some version of multifractal densities, even-though their constructions seems questionable (\cite{Attiaetal} and \cite{Cole}). The present work may form with \cite{Attiaetal}, \cite{Bmabrouk3}, \cite{Billingsley}, \cite{Cole}, \cite{Olsen1}, \cite{Olsen2} a quite full study of the concepts of multifractal densities of measures.

However, we noticed that there has been some lack in hypothesis in the last recent works \cite{Attiaetal} and \cite{Cole}. Although the developments in \cite{Attiaetal} are in some parts based on \cite{Cole} which also refers to \cite{Billingsley}, the authors did not pay attention to the fact that general probability measures (even-though being doubling) may not lead to multifractal dimensions. Indeed, it is already mentioned in \cite{Billingsley} that the lower bound in the definition of the $\mu$-Hausdorff measure due to a Borel probability measure $\mu$, (respectively, the upper bound in the definition of the $\mu$-packing measure) extends naturally on all the $\mu$-$\rho$-coverings of the analyzed sets (respectively, all the $\mu$-$\rho$-packings). Recall that, in \cite{Billingsley}, it is already mentioned that a $\mu$-$\rho$-covering imposes a control of the measure $\mu(C)$ of a cylinder by means of its radius $\rho$, such as $\mu(C)<\rho$. In \cite{Billingsley}, it is also excluded all atomic measures, as in this case we may not be able to find any $\mu$-$\rho$-coverings. Omitting these assumptions may lead to empty coverings (packings) suitable for inducing a multifractal analysis. 

To overcome these lacks, some weak hypothesis is assumed in the present work stating that one of the components of the vector-value measure should be quasi-Ahlfors. It consists of a weak form of the so-called Alhfors measures. See \cite{Edgar}, \cite{MattilaSaaramen}, \cite{Pajot} for backgrounds on these measures and their properties. In the sequel we will denote $\mathcal{P}_B(\mathbb{R}^d)$ the set of all Borel probability measures on $\mathbb{R}^d$, $n\geq1$. 
\begin{definition} A measure $\mu\in\mathcal{P}_B(\mathbb{R}^d)$ is said to be  quasi-Ahlfors regular with index $\alpha>0$ if 
$$
\displaystyle\limsup_{|U|\longrightarrow0}\displaystyle\frac{\mu(U)}{|U|^{\alpha}}<+\infty.
$$
\end{definition}
It is in fact easy to check that with this assumption, the extensions developed in \cite{Attiaetal} and \cite{Cole} become possible, and that an associated multifractal analysis may be addressed. 

For backgrounds and details on multifractal measures, dimensions, densities, examples, and couter-examples, the readers are asked to review \cite{Olsen1}, \cite{Olsen2}, \cite{Qu}, \cite{Xu-Wang}, \cite{Xu-Xu}, \cite{Xu-Xu-Zhong}, \cite{Ye1}, \cite{Ye1}, \cite{Yuan}, \cite{Zeng-Yuan-Xui}, \cite{Zhou-Feng}, \cite{Zhu-Zhou}).

The next section is devoted to some review on mixed multifractal analysis in general, and mixed multifractal densities in particular. Interactions with other fields, link with our results in the present work are addressed. Section 3 is devoted to the presentation of general mathematical settings and results that will be applied in the present study. In section 4, our main results in the present paper are provided without proofs. In section 5, proofs of main results are developed. Section 6 is a concluding part. Finally, section 7 is a brief appendix in which we provided the proof of a result applied in our study, which extends the well-known Billingsley result on dimension of measures. 
\section{Recent related works and motivations}
In a simple description, mixed multifractal analysis may be defined as mathematical glasses that permit to capture and/or to quantify the transient higher-order dependence beyond correlation of many measures, functions, time series, distributions, etc.

Mixed multifractal densities are applied in several contexts. In \cite{Jaerisch}, a mixed analysis of Markov maps has been conducted with at most countable number of branches. A mixed spectrum has been introduced known as mixed Birkhoff spectra combining many characteristics depending simultaneously on different sets of observations.

In \cite{Taylor}, a mixed analysis based idea has been applied in the context of multivariate time series with time-varying joint distribution. Based on the presence of possibly self-affine structure for the time series, the authors proposed a multivariate estimation of the distribution. Combrexelle in \cite{Combrexelle} showed a potential benefit of mixed multifractal analysis in remote sensing application. More precisely, the author addressed in \cite{Combrexelle} a multivariate analysis for images by developing a hierarchical models for distributions generalizing the Bayesian one. 

Hong et al (\cite{Hong}) developed a multifractal analysis of measures for the so-called mixed logical dynamical models to the classification of signals especially network traffic. These models are widely applied in the control of hybrid systems such as multi-server ones.

Mixed multifractal analysis of measures has been applied in \cite{Dai} to extract the properties of stock index series and in exploiting the eventual inner relationship that relates them. 

The last closest work to our's is developed in \cite{Khlifi}. However, the main difference is that the authors there considered the multifractal formalism for a measure relatively to another one controlled by the diameter of the covering elements. Instead of considering usual coverings, they applied the so-called $\nu$-$\delta$-coverings relatively to a suitable doubling measure $\nu$ to assure the control of both the measure of the covering and the diameters simultaneously, corresponding to the essential measure $\mu$ subject of the multifractal formalism. In the present work, we do not need such double covering assumption as quasi-Ahlfors assumption permits the multifractal analysis of the vector-valued measures and their densities. 

Besides, in \cite{DouziChaos2019}, the authors considered to revisit the well-known Besicovitch covering theorem in the framework of relative multifractal analysis. Results on multifractal densities in Tricot's sense have been established. The main idea consists in the use of doubling measures, which is not the case in the present work. In \cite{SelmiAMP2019}, density results have been established in the single case of multifractal analysis based on some necessary condition on strong regularity of measures. In our knowledge, the results developed in the present work are the first in the case of mixed multifractal analysis, for vector-valued measures and where weak assumption has been assumed on the vector. 

More about the use of mixed multifractal analysis of measures as well as functions and time series or images may be found in \cite{Abry,Kinnison} and the references therein. 
\section{General settings}
In this section, general settings will be reviewed especially those developed in \cite{Farhatetal}. We introduce the mixed multifractal generalisations of densities associated to vector-valued measures. Consider a vector $\mu=(\mu_{1},\mu_{2},...,\mu_{k})\in\left(\mathcal{P}_B(\mathbb{R}^d)\right)^k$ and denote
$$
\mu(B(x,r))=(\mu_{1}(B(x,r)),...,\mu_{k}(B(x,r))).
$$
For $q=(q_{1},q_{2},...,q_{k})\in\mathbb{R}^d$, we write
$$
\mu(B(x,r))^q=\left[\mu_{1}(B(x,r)\right]^{q_{1}}\times...\times \left[\mu_{k}(B(x,r)\right]^{q_{k}}.
$$
Let finally $\nu\in\mathcal{P}(\mathbb{R}^{d})$. The mixed multifractal generalisations of the Hausdorff measure with respect to $(\mu,\nu)$ is introduced in \cite{Farhatetal} as follows. For $E\subset\mathbb{R}^d$,
$$
{\overline{\mathcal{H}}}_{\mu,\nu,\delta}^{q,t}(E)=\inf\{\underset{i}{\sum}(\mu(B(x_{i},r_{i})))^{q}(\nu(B(x_{i},r_{i})))^{t}\}.
$$
The inf above is taken over the set of all centred $\delta$-coverings $(B(x_{i},r_{i})))_i$ of $E$. Next, we consider
$$
{\overline{\mathcal{H}}}_{\mu,\nu}^{q,t}(E)=\underset{\delta\downarrow0}{\text{lim}}{\overline{\mathcal{H}}}_{\mu,\nu,\delta}^{q,t}(E)=\underset{\delta>0}{\text{sup}}{\overline{\mathcal{H}}}_{\mu,\nu,\delta}^{q,t}(E),
$$
which induces finally,
$$
\mathcal{H}_{\mu,\nu}^{q,t}(E)=\underset{F\subseteq E}{\sup}{\overline{\mathcal{H}}}_{\mu,\nu}^{q,t}(F).
$$
Similarly to the mixed multifractal generalisation of Hausdorrf measure, the mixed generalized multifractal packing measure with respect to $(\mu,\nu)$ is introduced in \cite{Farhatetal} as follows. For $E\subset\mathbb{R}^d$,
$$
{\overline{\mathcal{P}}}_{\mu,\nu,\delta}^{q,t}(E)=\sup\{\;\underset{i}{\sum}(\mu(B(x_{i},r_{i})))^{q}(\nu(B(x_{i},r_{i}))^{t}\}.
$$
The sup above is taken over the set of all centred $\delta$-packings $(B(x_{i},r_{i})))_i$ of $E$. Let next
$$
{\overline{\mathcal{P}}}_{\mu,\nu}^{q,t}(E)=\underset{\delta\downarrow0}{\lim}{\overline{\mathcal{P}}}_{\mu,\nu,\delta}^{q,t}(E)=\underset{\delta>0}{\inf}{\overline{\mathcal{P}}}_{\mu,\nu,\delta}^{q,t}(E)
$$
and
$$
\mathcal{P}_{\mu,\nu}^{q,t}(E)=\underset{E\subseteq\;\underset{i}{\cup}E_{i}}{\inf}\underset{i}{\sum}{\overline{\mathcal{P}}}_{\mu,\nu}^{q,t}(E_{i}).
$$
The following Theorem proved in \cite{Farhatetal} resumes the characteristics of the mixed multifractal generalisations $\mathcal{H}_{\mu,\nu}^{q,t}$ and $\mathcal{P}_{\mu,\nu}^{q,t}$.
\begin{theorem}\label{Theorem1Part1}[\cite{Farhatetal}]
\begin{itemize}
\item The functions $\mathcal{H}_{\mu,\nu}^{q,t}$ and $\mathcal{P}_{\mu,\nu}^{q,t}$ are metric, outer measure and thus measures on Borel sets in $\mathbb{R}^{d}$.
\item For all $E\subset\mathbb{R}^d$, there exists unique extended reel numbers denoted $dim_{\mu,\nu}^{q}({E})$, $Dim_{\mu,\nu}^{q}({E}),\;\Delta_{\mu,\nu}^{q}({E})\in\left[-\infty,+\infty\right]$ satisfying respectively
$$
\mathcal{H}_{\mu,\nu}^{q,t}(E)=\left\{\begin{array}{lll}
\infty&\text{if}&t<dim_{\mu,\nu}^{q}({E}),\\
0&\text{if}&t>dim_{\mu,\nu}^{q}({E}),
\end{array}
\right.
$$
$$
\mathcal{P}_{\mu,\nu}^{q,t}(E)=\left\{
\begin{array}{lll}
\infty&\text{if}&t<Dim_{\mu,\nu}^{q}({E}),\\
0&\text{if}&t>Dim_{\mu,\nu}^{q}({E}),
\end{array}
\right.
$$
$$
{\overline{\mathcal{P}}}_{\mu,\nu}^{q,t}(E)=\left\{
\begin{array}{lll}
\infty&\text{if}&t<\Delta_{\mu,\nu}^{q}({E}),\\
0&\text{if}&t>\Delta_{\mu,\nu}^{q}({E}).
\end{array}
\right.
$$
\end{itemize}
\end{theorem}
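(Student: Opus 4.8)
The plan is to prove the two items separately, following the Carath\'eodory--Munroe machinery adapted to the mixed weights, and to organise the argument so that the three set functions $\mathcal{H}_{\mu,\nu}^{q,t}$, $\mathcal{P}_{\mu,\nu}^{q,t}$ and $\overline{\mathcal{P}}_{\mu,\nu}^{q,t}$ are treated in parallel, since all three carry the common factor $(\nu(B(x_i,r_i)))^t$ that drives the dimensional behaviour. For the first item I would start from the $\delta$-level set functions. The map $\delta\mapsto\overline{\mathcal{H}}_{\mu,\nu,\delta}^{q,t}$ is non-increasing in $\delta$ because shrinking $\delta$ only removes admissible centred coverings, so the limit defining $\overline{\mathcal{H}}_{\mu,\nu}^{q,t}$ exists and equals the supremum; dually, $\delta\mapsto\overline{\mathcal{P}}_{\mu,\nu,\delta}^{q,t}$ is non-decreasing and its limit is the infimum. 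Concatenating coverings (resp. packings) gives countable subadditivity of $\overline{\mathcal{H}}_{\mu,\nu}^{q,t}$ (resp. monotonicity of $\overline{\mathcal{P}}_{\mu,\nu}^{q,t}$). The reason the final regularisations $\sup_{F\subseteq E}$ and $\inf_{E\subseteq\cup_i E_i}$ are imposed is that the premeasures need not themselves be outer measures: a centred covering of $E$ is not a centred covering of a subset $F$, since the centres must lie in $F$, so monotonicity of $\overline{\mathcal{H}}_{\mu,\nu}^{q,t}$ can fail, while $\overline{\mathcal{P}}_{\mu,\nu}^{q,t}$ fails countable subadditivity. I would then verify directly that the two regularisations produce set functions that are monotone, vanish on $\emptyset$, and are countably subadditive, i.e. outer measures.

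The metric property is the heart of the first item. Given $A,B$ with $d(A,B)=\rho>0$, I would fix $\delta<\rho/3$; then any ball $B(x_i,r_i)$ with $x_i\in A$ and $r_i\le\delta$ cannot meet $B$, so every centred $\delta$-covering (resp. packing) of $A\cup B$ splits into a disjoint piece for $A$ and one for $B$. Summing the weights across the split, together with the subadditivity already proved, yields $\overline{\mathcal{H}}_{\mu,\nu,\delta}^{q,t}(A\cup B)=\overline{\mathcal{H}}_{\mu,\nu,\delta}^{q,t}(A)+\overline{\mathcal{H}}_{\mu,\nu,\delta}^{q,t}(B)$, and passing to the limit in $\delta$ and through the regularisation preserves additivity on positively separated sets. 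Having established that $\mathcal{H}_{\mu,\nu}^{q,t}$ and $\mathcal{P}_{\mu,\nu}^{q,t}$ are metric outer measures, I would invoke Munroe's criterion (a metric outer measure renders every Borel set Carath\'eodory measurable) to conclude that both restrict to genuine measures on the Borel $\sigma$-algebra of $\mathbb{R}^d$.

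For the second item I would fix $E$ and $q$ and study the dependence on $t$. Monotonicity is immediate: since $\nu$ is a probability measure, $\nu(B(x_i,r_i))\le 1$, so for $t<s$ one has $(\nu(B(x_i,r_i)))^{s}\le(\nu(B(x_i,r_i)))^{t}$, which forces $t\mapsto\mathcal{H}_{\mu,\nu}^{q,t}(E)$, and likewise the two packing quantities, to be non-increasing. The decisive step is the jump: I must show that $\mathcal{H}_{\mu,\nu}^{q,t}(E)<\infty$ implies $\mathcal{H}_{\mu,\nu}^{q,s}(E)=0$ for every $s>t$. The mechanism is the factorisation $(\nu(B(x_i,r_i)))^{s}=(\nu(B(x_i,r_i)))^{t}(\nu(B(x_i,r_i)))^{s-t}$ combined with the fact that the extra factor is uniformly small: along a centred $\delta$-covering the radii satisfy $r_i\le\delta$, and a quantitative control of the reference measure of the form $\nu(B(x_i,r_i))\le C\,\delta^{\alpha}$ for $\delta$ small gives $(\nu(B(x_i,r_i)))^{s-t}\le (C\delta^{\alpha})^{s-t}\to0$. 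Applying this to a near-optimal covering for $\overline{\mathcal{H}}_{\mu,\nu,\delta}^{q,t}$ then drives $\overline{\mathcal{H}}_{\mu,\nu,\delta}^{q,s}(E)$ to $0$ as $\delta\downarrow0$, and the regularisation preserves the nullity.

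This dichotomy shows that $\{t:\mathcal{H}_{\mu,\nu}^{q,t}(E)=\infty\}$ and $\{t:\mathcal{H}_{\mu,\nu}^{q,t}(E)=0\}$ are complementary half-lines meeting in at most one point, so their common endpoint defines the unique number $dim_{\mu,\nu}^{q}(E)$; running the identical factorisation for the packing premeasure and for the packing measure yields $\Delta_{\mu,\nu}^{q}(E)$ and $Dim_{\mu,\nu}^{q}(E)$. The main obstacle is precisely this jump estimate: without a quantitative control of $\nu(B(x_i,r_i))$ by the radius, the extra factor need not tend to zero, the function $t\mapsto\mathcal{H}_{\mu,\nu}^{q,t}(E)$ could remain strictly between $0$ and $\infty$ on a whole interval, and the critical exponents would fail to be well defined. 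This is exactly the degeneracy that the quasi-Ahlfors hypothesis is designed to exclude, and it is where I would concentrate the technical effort.
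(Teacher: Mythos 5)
Your proposal is correct and takes essentially the same route as the paper: the only portion of this theorem the paper proves in-text (the statement itself being quoted from \cite{Farhatetal}) is exactly your jump estimate, namely the factorization $\nu(B)^{s}=\nu(B)^{t}\,\nu(B)^{s-t}$ combined with the quasi-Ahlfors bound $\nu(B)\leq M|B|^{\alpha}$ to force the extra factor to $0$ as $\delta\downarrow 0$, yielding the dichotomy that defines the critical exponents. The metric outer-measure item is not proved in this paper but deferred to \cite{Farhatetal} (your Carath\'eodory--Munroe sketch is the standard argument for it), and the paper's only material beyond your text is a Besicovitch-based lemma showing that $\Gamma_{q}=\{t:\,\mathcal{H}_{\mu,\nu}^{q,t}(E)<+\infty\}$ is nonempty, which bounds $dim_{\mu,\nu}^{q}(E)$ from above but is not needed for the statement as formulated, since the cut-off values are allowed to lie in $[-\infty,+\infty]$.
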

For the convenience and to prove the necessity of the assumption of quasi-Ahlfors regularity mentioned above we recall in brief the proof of the first cutt-off value due to $\mathcal{H}_{\mu,\nu}^{q,t}$ in Theorem \ref{Theorem1Part1}. We claim firstly the following lemma.
\begin{lemma}
$\forall\,E\subseteq\mathbb{R}^{d}$ and $\forall q\in\mathbb{R}^{k}$, the set
$$
\Gamma_{q}=\left\{t\;\mathcal{H}_{\mu,\nu}^{q,t}(E)<+\infty\right\}\neq\emptyset.
$$
\end{lemma}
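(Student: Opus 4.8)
The plan is to exhibit a single finite exponent $t$ for which $\mathcal{H}_{\mu,\nu}^{q,t}(E)<+\infty$, which already forces $\Gamma_q\neq\emptyset$. First I would record the elementary monotonicity underlying the whole cut-off phenomenon: since each $\mu_j$ and $\nu$ are probability measures, one has $\nu(B(x,r))\leq1$ for every ball, so $t\mapsto(\nu(B(x,r)))^{t}$ is non-increasing. Consequently $t\mapsto\overline{\mathcal{H}}_{\mu,\nu,\delta}^{q,t}(E)$, and hence $\mathcal{H}_{\mu,\nu}^{q,t}(E)$, is non-increasing in $t$; thus $\Gamma_q$ is closed upward, and it suffices to produce one sufficiently large $t$ making the measure finite. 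Because $\mathcal{H}_{\mu,\nu}^{q,t}(E)=\sup_{F\subseteq E}\overline{\mathcal{H}}_{\mu,\nu}^{q,t}(F)$ and $\overline{\mathcal{H}}_{\mu,\nu}^{q,t}=\sup_{\delta>0}\overline{\mathcal{H}}_{\mu,\nu,\delta}^{q,t}$, the task reduces to bounding the premeasure $\overline{\mathcal{H}}_{\mu,\nu,\delta}^{q,t}(F)$ by a constant independent of $F\subseteq E$ and of $\delta$.

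Next I would build one economical covering rather than optimise the infimum. For fixed $\delta$ and $F$, attach to each $x\in F$ a ball $B(x,r_x)$ with $r_x<\delta$ and extract, by the Besicovitch covering theorem, a countable subfamily $(B_i)_i=(B(x_i,r_i))_i$ covering $F$ with overlap bounded by a dimensional constant $N=N(d)$. Writing $|B_i|=2r_i$ and invoking the quasi-Ahlfors hypothesis on $\nu$ (with index $\alpha$), I would replace the $\nu$-factor by a power of the diameter, $\nu(B_i)\leq c\,|B_i|^{\alpha}$ for $r_i$ small, and bound the coordinates of $(\mu(B_i))^{q}$ with $q_j\geq0$ trivially by $\mu_j(B_i)^{q_j}\leq1$. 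Choosing $t\geq1$ gives $(\nu(B_i))^{t}\leq\nu(B_i)$, and the bounded-overlap property then yields $\sum_i\nu(B_i)\leq N\,\nu(\mathbb{R}^d)=N$. In the purely non-negative regime $q_j\geq0$ for all $j$ this already closes the argument: $\overline{\mathcal{H}}_{\mu,\nu,\delta}^{q,1}(F)\leq N$ uniformly, whence $\mathcal{H}_{\mu,\nu}^{q,1}(E)\leq N<+\infty$ and $1\in\Gamma_q$.

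The main obstacle is the presence of negative coordinates $q_j<0$, where $\mu_j(B_i)^{q_j}=\mu_j(B_i)^{-|q_j|}$ blows up as the covering is refined and is not controlled by $\mu_j(B_i)\leq1$. This is exactly the difficulty the paper singles out, and where the quasi-Ahlfors regularity becomes indispensable: it converts the surviving $\nu$-factor into a genuine positive power $|B_i|^{\alpha t}$ of the diameter that one can play against the exploding $\mu$-factors. I would therefore not stop at $t=1$ but take $t$ large, using $(\nu(B_i))^{t}\leq c^{t}|B_i|^{\alpha t}$ to dominate $\prod_{q_j<0}\mu_j(B_i)^{-|q_j|}$ by a high positive power of the diameter, so that each summand is controlled by $\nu(B_i)$ once $t$ is large enough in terms of $\alpha$ and the $q_j$, and summing again through the bounded overlap gives a finite total. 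The delicate point is making this domination uniform over all balls and all $F\subseteq E$ at once, i.e. comparing $\mu_j(B_i)$ to the diameter for the negatively weighted components; this is precisely the comparison that the regularity hypothesis supplies and that is absent in the works the paper revisits, so it is here that the assumption is genuinely used to conclude $t\in\Gamma_q$.
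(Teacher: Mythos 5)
Your treatment of the case where all $q_j\ge 0$ is correct and coincides with the paper's first case: bound each $\mu_j(B_i)^{q_j}$ by $1$, take $t=1$ so that $\nu(B_i)^t\le\nu(B_i)$, and use the Besicovitch covering theorem to sum the $\nu$-masses over a family of bounded overlap, giving $\mathcal{H}_{\mu,\nu}^{q,1}(E)\le N<\infty$. The upward-closure remark via $\nu(B)\le 1$ is also fine. The gap is entirely in your handling of negative coordinates $q_j<0$, which is the case where the statement actually has content. You propose to use the quasi-Ahlfors hypothesis to ``compare $\mu_j(B_i)$ to the diameter,'' i.e.\ to dominate $\prod_{q_j<0}\mu_j(B_i)^{-|q_j|}$ by a negative power of $|B_i|$ that can then be beaten by $|B_i|^{\alpha t}$ for $t$ large. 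But the hypothesis supplies no such comparison: it concerns $\nu$ alone, not the components $\mu_j$, and it is only an \emph{upper} bound, $\nu(U)\le M|U|^{\alpha}$; it gives a lower bound on no measure whatsoever. Nor can such a bound hold in general: a probability measure may assign balls mass decaying faster than any power of the radius (e.g.\ $\mu_j(B(x,r))$ of order $e^{-1/r}$ on part of its support), in which case $|B_i|^{\alpha t}\,\mu_j(B_i)^{-|q_j|}$ is unbounded for \emph{every} finite $t$, and your pointwise domination ``each summand controlled by $\nu(B_i)$'' is impossible. So the step ``take $t$ large enough in terms of $\alpha$ and the $q_j$'' cannot be closed as written.

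The paper's proof avoids any pointwise control of the $\mu$-factors. It uses the quasi-Ahlfors bound only to replace $\nu(B_i)^t$ by $M^t(2r_i)^{\alpha t}$, so that the covering sum becomes, up to the constant, $\sum_i\mu(B_i)^q(2r_i)^{\alpha t}$, which is precisely the premeasure defining the classical (non-mixed) multifractal Hausdorff measure $\mathcal{H}_{\mu}^{q,s}$ at $s=\alpha t$. This yields $\mathcal{H}_{\mu,\nu}^{q,t}(E)\le C\,M^{t}\,\mathcal{H}_{\mu}^{q,\alpha t}(E)$, and choosing $t>\frac{1}{\alpha}\max\bigl(1,\dim_{\mu}^{q}(E)\bigr)$ forces $\mathcal{H}_{\mu}^{q,\alpha t}(E)=0<\infty$ by the known cut-off theorem for $\mathcal{H}_{\mu}^{q,s}$. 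In other words, the blow-up of $\mu(B_i)^q$ is absorbed into the already established dimension theory for the measure $\mathcal{H}_{\mu}^{q,s}$, rather than dominated ball by ball; this reduction to $\mathcal{H}_{\mu}^{q,\alpha t}$ and its dimension $\dim_{\mu}^{q}(E)$ is the idea missing from your proposal, and it is what you would need to add to make the negative-coordinate case work.
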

\begin{proof}
Let $M\in\mathbb{R}_{+}^{\ast}$ be such that
$$
\underset{\left|U\right|\rightarrow0}{{\overline{\lim}}}\frac{\nu(U)}{\left|U\right|^{\alpha}}<M.
$$
This implies that for some $\delta>0$, and $\forall r$; $0<r<\delta$, we have 
\begin{equation}
\nu(U)\leq M\left|U\right|^{\alpha};\forall\,U\hbox{ s.t. }\left|U\right|<r.
\end{equation}
Let next $(B(x_{i},r_{i}))_{i}$ be a $\delta$-covering of $E$ and consider the $N_B$ collections defined in the Besicovitch covering theorem. We get
\begin{equation}
\underset{i}{\sum}\mu(B(x_{i},r_{i}))^{q}\nu(B(x_{i},r_{i}))^{t}\leq\underset{i=1}{\overset{N_B}{\sum}}\underset{j}{\sum}\mu(B(x_{ij},r_{ij}))^{q}\nu(B(x_{ij},r_{ij}))^{t}.
\end{equation}
Whenever $q\geq 0,$ the right hand term is bounded by
$$
\underset{i=1}{\overset{N_B}{\sum}}\underset{j}{\sum}\nu(B(x_{ij},r_{ij}))^{t}.
$$
For $t=1,$ this becomes%
$$
\underset{i=1}{\overset{N_B}{\sum}}\underset{j}{\sum}\nu(B(x_{ij},r_{ij})).
$$
As the $(B(x_{ij},r_{ij}))_{j}$ are disjoint, the last quantity will be bounded by
$$
\underset{i=1}{\overset{N_B}{\sum}}\nu\left(\underset{j}{\cup}B(x_{ij},r_{ij})\right)\leq N_B\nu(\mathbb{R}^{d})=N_B.
$$
Consequently
$$
\mathcal{H}_{\mu,\nu}^{q,1}(E)<+\infty.
$$
Assume now that $q_{i}\leq0$ for some $i$, $1\leq i\leq k$. We get
$$
\underset{i}{\sum}\mu(B(x_{i},r_{i}))^{q}\nu(B(x_{i},r_{i}))^{t}\leq2^{-\alpha}M^{t}\underset{i}{\sum}\mu(B(x_{i},r_{i}))^{q}(2r_{i})^{\alpha t}.
$$
Let next $t>\frac{1}{\alpha}\left[\max\left(1,dim_{\mu}^{q}(E)\right)\right].$ We obtain
$$
\mathcal{H}_{\mu,\nu}^{q,t}(E)\leq2^{-\alpha t}M^{t}\mathcal{H}_{\mu}^{q,\alpha t}(E)<+\infty.
$$
\end{proof}
\begin{lemma}
i) $\mathcal{H}_{\mu,\nu}^{q,t}(E)<+\infty$ $\Rightarrow \mathcal{H}_{\mu,\nu}^{q,s}(E)=0,$ $\forall$ $s>t.$\\
ii) $\mathcal{H}_{\mu,\nu}^{q,t}(E)>0$ $\Rightarrow \mathcal{H}_{\mu,\nu}^{q,s}(E)=+\infty ,$ $\forall $ $s<t.$
\end{lemma}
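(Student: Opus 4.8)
The plan is to reduce both statements to a single monotonicity estimate at the level of the $\delta$-pre-measures, exploiting the quasi-Ahlfors regularity of $\nu$ exactly as in the proof of Lemma~1. I first treat (i). Fix $s>t$ and let $F\subseteq E$ be arbitrary; since $\mathcal{H}_{\mu,\nu}^{q,t}(E)=\sup_{F\subseteq E}{\overline{\mathcal{H}}}_{\mu,\nu}^{q,t}(F)$, the hypothesis gives ${\overline{\mathcal{H}}}_{\mu,\nu}^{q,t}(F)\leq\mathcal{H}_{\mu,\nu}^{q,t}(E)<+\infty$. For any centred $\delta$-covering $(B(x_{i},r_{i}))_{i}$ of $F$ I would factor each term as
$$
(\mu(B(x_{i},r_{i})))^{q}(\nu(B(x_{i},r_{i})))^{s}=(\mu(B(x_{i},r_{i})))^{q}(\nu(B(x_{i},r_{i})))^{t}\,(\nu(B(x_{i},r_{i})))^{s-t},
$$
and then bound the last factor uniformly in $i$.

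The key step, where quasi-Ahlfors regularity enters, is the following. Taking $M$ as in Lemma~1 so that $\nu(U)\leq M|U|^{\alpha}$ whenever $|U|$ is small enough, a ball of the $\delta$-covering satisfies $\nu(B(x_{i},r_{i}))\leq M(2r_{i})^{\alpha}\leq M(2\delta)^{\alpha}$ once $\delta$ is sufficiently small. Since $s-t>0$ and $M(2\delta)^{\alpha}<1$ for small $\delta$, raising to the power $s-t$ preserves the inequality, so $(\nu(B(x_{i},r_{i})))^{s-t}\leq(M(2\delta)^{\alpha})^{s-t}$. Summing over $i$ and passing to the infimum over all centred $\delta$-coverings of $F$ yields
$$
{\overline{\mathcal{H}}}_{\mu,\nu,\delta}^{q,s}(F)\leq(M(2\delta)^{\alpha})^{s-t}\,{\overline{\mathcal{H}}}_{\mu,\nu,\delta}^{q,t}(F).
$$
Letting $\delta\downarrow0$, the factor ${\overline{\mathcal{H}}}_{\mu,\nu,\delta}^{q,t}(F)$ increases to ${\overline{\mathcal{H}}}_{\mu,\nu}^{q,t}(F)<+\infty$ while $(M(2\delta)^{\alpha})^{s-t}\to0$, forcing ${\overline{\mathcal{H}}}_{\mu,\nu}^{q,s}(F)=0$. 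As $F\subseteq E$ was arbitrary, $\mathcal{H}_{\mu,\nu}^{q,s}(E)=\sup_{F\subseteq E}{\overline{\mathcal{H}}}_{\mu,\nu}^{q,s}(F)=0$, which is (i).

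Part (ii) I would then obtain as the contrapositive of (i): negating ``$\mathcal{H}_{\mu,\nu}^{q,t}(E)<+\infty$'' gives ``$=+\infty$'' and negating ``$\mathcal{H}_{\mu,\nu}^{q,s}(E)=0$'' gives ``$>0$'' (both quantities being non-negative), so (i) applied to the pair $(s,t)$ with $s<t$ reads $\mathcal{H}_{\mu,\nu}^{q,t}(E)>0\Rightarrow\mathcal{H}_{\mu,\nu}^{q,s}(E)=+\infty$, which is exactly (ii).

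I expect the genuine content, and the only delicate point, to be the uniform bound $\nu(B(x_{i},r_{i}))\leq M(2\delta)^{\alpha}\to0$: this is precisely where the quasi-Ahlfors hypothesis on $\nu$ is indispensable. For a general (for instance atomic) probability measure $\nu$, the quantities $\nu(B(x_{i},r_{i}))$ need not shrink as $\delta\downarrow0$, the vanishing factor $(\cdot)^{s-t}$ would disappear, and the cut-off structure of $\mathcal{H}_{\mu,\nu}^{q,t}$ would fail, which is exactly the gap in the earlier works this paper sets out to repair. I would also flag the minor conventions for terms with $\nu(B(x_{i},r_{i}))=0$ (relevant when some $q_{i}$ or $t$ is negative), restricting without loss of generality to balls centred in the support of $\nu$.
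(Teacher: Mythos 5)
Your proof is correct, and part (i) is essentially the paper's own argument: the same factorization $\nu(B)^{s}=\nu(B)^{t}\,\nu(B)^{s-t}$ combined with the quasi-Ahlfors bound on $\nu(B(x_i,r_i))$, giving ${\overline{\mathcal{H}}}_{\mu,\nu,\delta}^{q,s}\leq C(\delta)\,{\overline{\mathcal{H}}}_{\mu,\nu,\delta}^{q,t}$ with $C(\delta)\to0$ as $\delta\downarrow0$. You are in fact more careful on two counts: the paper writes the constant as $M^{s-t}\delta^{s-t}$, silently dropping the exponent $\alpha$ and the diameter factor, whereas your $(M(2\delta)^{\alpha})^{s-t}$ is what quasi-Ahlfors regularity actually yields; and the paper runs the estimate on $E$ itself and concludes directly for $\mathcal{H}_{\mu,\nu}^{q,s}(E)$, glossing over the supremum over subsets in the definition $\mathcal{H}_{\mu,\nu}^{q,s}(E)=\sup_{F\subseteq E}{\overline{\mathcal{H}}}_{\mu,\nu}^{q,s}(F)$, which your passage through arbitrary $F\subseteq E$ with ${\overline{\mathcal{H}}}_{\mu,\nu}^{q,t}(F)\leq\mathcal{H}_{\mu,\nu}^{q,t}(E)<\infty$ correctly fills. (One micro-remark: the condition $M(2\delta)^{\alpha}<1$ is not needed to raise the inequality to the power $s-t$; monotonicity of $x\mapsto x^{p}$ on $[0,\infty)$ for $p>0$ suffices.) Where you genuinely diverge is part (ii): the paper reruns the analytic estimate with $s-t<0$, noting that the premeasure inequality reverses, ${\overline{\mathcal{H}}}_{\mu,\nu,\delta}^{q,s}(E)\geq M^{s-t}\delta^{s-t}{\overline{\mathcal{H}}}_{\mu,\nu,\delta}^{q,t}(E)$, and that the factor blows up as $\delta\downarrow0$; you instead obtain (ii) as the contrapositive of (i) with the roles of $s$ and $t$ exchanged, which is logically sound (both quantities are nonnegative, so the negations are exactly the ones you state), shorter, and avoids the slightly delicate manipulation of inequalities under negative exponents. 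The only thing the paper's route buys in exchange is the explicit quantitative lower bound on the $\delta$-premeasure, which is marginally more information than the cut-off dichotomy itself; your closing remarks on where quasi-Ahlfors enters and on the convention for balls with $\nu(B)=0$ correctly identify precisely the points the paper leaves implicit.
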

\begin{proof}
i) Let $(B(x_{i},r_{i}))_{i}$ be a $\delta$-covering of $E$. We may write that
$$
\begin{array}{lll}
\displaystyle\sum_i\mu(B(x_{i},r_{i}))^q\nu(B(x_{i},r_{i}))^s
&=&\displaystyle\sum_i\mu(B(x_{i},r_{i}))^q\nu(B(x_{i},r_{i}))^{t}\nu(B(x_{i},r_{i}))^{s-t}\\
&\leq&M^{s-t}\delta^{s-t}\displaystyle\sum_i\mu(B(x_{i},r_{i}))^q\nu(B(x_{i},r_{i}))^{t}.
\end{array}
$$
Consequently,
\begin{equation}
{\overline{\mathcal{H}}}_{\mu,\nu,\delta}^{q,s}(E)\leq M^{s-t}\delta^{s-t}{\overline{\mathcal{H}}}_{\mu,\nu,\delta}^{q,t}(E).
\end{equation}
Hence,
$$
\mathcal{H}_{\mu,\nu}^{q,s}(E)=0.
$$
ii) Using the same arguments as in assertion i, we get
$$
{\overline{\mathcal{H}}}_{\mu,\nu,\delta}^{q,s}(E)\geq M^{s-t}\delta^{s-t}{\overline{\mathcal{H}}}_{\mu,\nu,\delta}^{q,t}(E)\hbox{ (as }s-t<0).
$$
Consequently,
$$
\mathcal{H}_{\mu,\nu}^{q,s}(E)=+\infty .
$$
\end{proof}
The two Lemmas above permit to introduce the generalised mixed multifractal Hausdorff dimension as
$$
dim_{\mu,\nu}^{q}(E)=\inf\left\{t,\,\mathcal{H}_{\mu,\nu}^{q,t}(E)=0\right\}=\sup\left\{t,\,\mathcal{H}_{\mu,\nu}^{q,t}(E)=+\infty \right\}.
$$
We now develop our idea about the mixed density of measures. The original definition are introduced in \cite{Cole} and \cite{Cole-Olsen}). It is next re-studied by several authors, especially in \cite{Attiaetal} and \cite{Olsen1}. In the present work, we conduct an extension of these works to the case introduced in \cite{Menceuretal} and their most recent generalizations in \cite{Farhatetal}.
\begin{definition}\label{mixeddensitiesdef}
Let $\theta\in\mathcal{P}(\mathbb{R}^{d})$, $x\in\,{Support}(\theta)$, $q=(q_1,q_2,...,q_k)\in\mathbb{R}^k$, and $t\in\mathbb{R}$. The upper, respectively, lower, $(q,t)$-density of $\theta$ at the point $x$ relatively to $(\mu,\nu)$ is defined by
$$
{\overline{d}}_{\mu,\nu}^{q,t}(x,\theta)=\underset{r\rightarrow0}{\lim\sup}\frac{\theta(B(x,r))}{(\mu(B(x,r)))^{q}(\nu(B(x,r))^{t}},
$$
respectively, 
$$
{\underline{d}}_{\mu,\nu}^{q,t}(x,\theta)=\underset{r\rightarrow 0}{\liminf}\frac{\theta(B(x,r))}{(\mu(B(x,r)))^{q}(\nu(B(x,r))^{t}}.
$$
Whenever ${\overline{d}}_{\mu,\nu}^{q,t}(x,\theta)={\underline{d}}_{\mu,\nu}^{q,t}(x,\theta)$, we denote by ${{d}}_{\mu,\nu}^{q,t}(x,\theta)$ their common value, which will be called the $(q,t)$-density of $\theta$ relatively to $(\mu,\nu)$ at the point $x$.
\end{definition}
Next, for $a>1$, and a single measure $\mu\in\mathcal{P}(\mathbb{R}^d)$, let
$$
P_{a}(\mu)=\underset{r\downarrow0}{\limsup}\left(\underset{x\in S_\mu}{\sup}\frac{\mu(B(x,ar))}{\mu(B(x,r))}\right),
$$
and for a vector-valued measure $\mu=(\mu_1,\mu_2,...,\mu_k)\in\bigl(\mathcal{P}(\mathbb{R}^d)\bigr)^k$,
$$
P_{a}(\mu)=\displaystyle\prod_{i=1}^kP_{a}(\mu_i).
$$
Finally, define the set of the so-called doubling vector-valued measures on $\mathbb{R}^{d},$ by
$$
P_{D}(\mathbb{R}^{d})=\displaystyle\bigcup_{a>1}\left\{\mu\in\mathcal{P}(\mathbb{R}^{d});\;P_{a}(\mu)<\infty\right\}.
$$
\section{Main results}
The first result of the present work deals with the establishment of lower and upper bounds for the mixed multifractal density introduced above. We will see that such bounds permit to obtain the multifractal formalism already introduced in \cite{Olsen1} and re-considered next in \cite{Attiaetal}, \cite{Cole-Olsen}, \cite{Farhatetal}, \cite{Menceuretal} and \cite{Menceuretal1}.
\begin{theorem}\label{mixeddensityestimatestheorem1}
There exists $C_1,C_2>0$ constants satisfying for all Borel set $E\subseteq\,S_\mu$, 
\begin{equation}\label{mixeddensityestimatestheorem1eq1}
C_1\mathcal{H}_{\mu,\nu}^{q,t}(E)\underset{x\in E}{\inf}{\overline{d}}_{\mu,\nu}^{q,t}(x,\theta)\leq\theta(E)\leq C_2\mathcal{H}_{\mu,\nu}^{q,t}(E)\underset{x\in E}{\sup}{\overline{d}}_{\mu,\nu}^{q,t}(x,\theta),
\end{equation}
whenever $\mathcal{H}_{\mu,\nu}^{q,t}(E)<\infty$, and
\begin{equation}\label{mixeddensityestimatestheorem1eq2}
C_1\mathcal{P}_{\mu,\nu}^{q,t}(E)\underset{x\in E}{\inf}{\underline{d}}_{\mu,\nu}^{q,t}(x,\theta)\leq\theta(E)\leq C_2\mathcal{P}_{\mu,\nu}^{q,t}(E)\underset{x\in E}{\sup}{\underline{d}}_{\mu,\nu}^{q,t}(x,\theta),
\end{equation}
whenever $\mathcal{P}_{\mu,\nu}^{q,t}(E)<\infty$.
\end{theorem}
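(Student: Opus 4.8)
The plan is to establish the four inequalities by adapting the classical density-comparison theorems of geometric measure theory to the mixed gauge $W(B(x,r))=(\mu(B(x,r)))^{q}(\nu(B(x,r)))^{t}$, with the Besicovitch covering theorem supplying the constants. Throughout, since $E\subseteq S_\mu$ and every ball used is centred at a point of $E$, the gauge $W$ is well defined and strictly positive, so forming the density quotients is harmless; I also use that $\theta$, being a Borel probability measure on $\mathbb{R}^{d}$, is outer regular and continuous from below. The two Hausdorff inequalities in \eqref{mixeddensityestimatestheorem1eq1} will be paired with the upper density $\overline{d}_{\mu,\nu}^{q,t}$ and the two packing inequalities in \eqref{mixeddensityestimatestheorem1eq2} with the lower density $\underline{d}_{\mu,\nu}^{q,t}$, reflecting that a $\limsup$ controls coverings while a $\liminf$ controls packings.

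For the two directions requiring a bound valid for \emph{all} sufficiently small radii, namely the right-hand inequality of \eqref{mixeddensityestimatestheorem1eq1} and the left-hand inequality of \eqref{mixeddensityestimatestheorem1eq2}, I would first reduce to a uniform scale. Writing $\sigma=\sup_{x\in E}\overline{d}_{\mu,\nu}^{q,t}(x,\theta)$ and fixing $\epsilon>0$, decompose $E=\bigcup_n E_n$ with $E_n=\{x\in E:\theta(B(x,r))\leq(\sigma+\epsilon)W(B(x,r))\text{ for all }r<1/n\}$, an increasing sequence. For any centred $\delta$-covering of $E_n$ with $\delta<1/n$ the density bound applies termwise, giving $\theta(E_n)\leq(\sigma+\epsilon)\overline{\mathcal{H}}_{\mu,\nu,\delta}^{q,t}(E_n)$; letting $\delta\downarrow0$, then $n\to\infty$ by continuity of $\theta$ from below, and finally $\epsilon\downarrow0$, yields $\theta(E)\leq\sigma\,\mathcal{H}_{\mu,\nu}^{q,t}(E)$, with constant $1$. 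The analogous decomposition using the lower density and centred $\delta$-packings, together with $\mathcal{P}_{\mu,\nu}^{q,t}\leq\overline{\mathcal{P}}_{\mu,\nu}^{q,t}$ and continuity from below of the measure $\mathcal{P}_{\mu,\nu}^{q,t}$, produces the left-hand inequality of \eqref{mixeddensityestimatestheorem1eq2} with constant $1$.

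For the two directions where good radii are only guaranteed to exist \emph{arbitrarily close} to $0$, namely the left-hand inequality of \eqref{mixeddensityestimatestheorem1eq1} and the right-hand inequality of \eqref{mixeddensityestimatestheorem1eq2}, I would instead build, for each $\delta>0$ and each Borel $F\subseteq E$, a Vitali fine cover of $F$ by balls $B(x,r)$ with $x\in F$ and $r<\delta$ on which $\theta(B)\geq(\iota-\epsilon)W(B)$ (resp. $\theta(B)\leq(\sigma+\epsilon)W(B)$), where $\iota=\inf_{x\in E}\overline{d}_{\mu,\nu}^{q,t}$ (resp. $\sigma=\sup_{x\in E}\underline{d}_{\mu,\nu}^{q,t}$). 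Applying the Besicovitch covering theorem extracts the $N_B$ subfamilies of pairwise disjoint balls (the constant $N_B$ already appearing in Section 3), each a centred $\delta$-packing, whose union covers $F$ with overlap at most $N_B$. For the Hausdorff lower bound this gives $\overline{\mathcal{H}}_{\mu,\nu,\delta}^{q,t}(F)\leq\sum_i W(B_i)\leq\frac{N_B}{\iota-\epsilon}\theta(U)$ for every open $U\supseteq E$; letting $U\downarrow E$ by outer regularity, taking the supremum over $F\subseteq E$ in the definition of $\mathcal{H}_{\mu,\nu}^{q,t}$, and sending $\epsilon\downarrow0$ yields $\theta(E)\geq\frac{1}{N_B}\,\iota\,\mathcal{H}_{\mu,\nu}^{q,t}(E)$, so $C_1=1/N_B$.

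The main obstacle is the right-hand inequality of \eqref{mixeddensityestimatestheorem1eq2}, the packing upper bound, because the genuine packing measure $\mathcal{P}_{\mu,\nu}^{q,t}$ is the infimum of $\sum_i\overline{\mathcal{P}}_{\mu,\nu}^{q,t}(E_i)$ over countable covers and is therefore strictly smaller than $\overline{\mathcal{P}}_{\mu,\nu}^{q,t}$, so bounding $\theta(E)$ directly by $\overline{\mathcal{P}}_{\mu,\nu}^{q,t}(E)$ would be too weak. I would circumvent this by first proving the localized estimate $\theta(F)\leq N_B\,\sigma\,\overline{\mathcal{P}}_{\mu,\nu}^{q,t}(F)$ for \emph{every} Borel $F\subseteq E$, obtained by summing $\theta$ over the $N_B$ disjoint packings furnished by Besicovitch and letting $\delta\downarrow0$, $\epsilon\downarrow0$; then, for any countable cover $E\subseteq\bigcup_i E_i$, estimating $\theta(E)\leq\sum_i\theta(E\cap E_i)\leq N_B\,\sigma\sum_i\overline{\mathcal{P}}_{\mu,\nu}^{q,t}(E_i)$ and taking the infimum over covers converts $\overline{\mathcal{P}}_{\mu,\nu}^{q,t}$ into $\mathcal{P}_{\mu,\nu}^{q,t}$, giving $\theta(E)\leq N_B\,\sigma\,\mathcal{P}_{\mu,\nu}^{q,t}(E)$ and hence $C_2=N_B$. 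Choosing $C_1=1/N_B$ and $C_2=N_B$ then serves all four inequalities at once. What remains is bookkeeping: checking that each extracted family is an admissible centred $\delta$-covering or $\delta$-packing, absorbing the balls protruding from $E$ through outer regularity of $\theta$, and invoking the quasi-Ahlfors hypothesis on $\nu$, which as in the preceding lemmas keeps $\mathcal{H}_{\mu,\nu}^{q,t}$ and $\mathcal{P}_{\mu,\nu}^{q,t}$ non-degenerate and finite where the finiteness conditions are imposed.
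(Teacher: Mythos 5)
Your proposal is correct, and for three of the four inequalities it is essentially the paper's own argument: the Hausdorff upper bound via the increasing decomposition of $E$ into sets on which the density bound holds uniformly at all scales below a threshold (the paper indexes these sets by $\delta$ rather than by $1/n$), the packing lower bound via disjointness of centred $\delta$-packings plus continuity from below, and the Hausdorff lower bound via the Besicovitch covering theorem, which is exactly where the paper's constant $C_1=1/N_B$ comes from as well. The one genuine divergence is the packing upper bound, i.e.\ the right-hand side of (\ref{mixeddensityestimatestheorem1eq2}). There the paper invokes Vitali's covering theorem for $\theta$: from the fine cover of $F$ by balls satisfying $\theta(B)\leq\overline{D}_\eta\,\Gamma_{\mu,\nu}^{q,t}(B)$ it extracts a \emph{single} disjoint centred $\delta$-packing carrying full $\theta$-measure of $F$, yielding the localized bound $\theta(F)\leq\overline{D}\,{\overline{\mathcal{P}}}_{\mu,\nu}^{q,t}(F)$ with constant $1$; you instead extract the $N_B$ disjoint subfamilies via Besicovitch, each a centred $\delta$-packing, and pay the factor $N_B$. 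Both are sound: the paper's route buys the sharper constant ($C_2=1$, consistent with Remark \ref{remark1}, on which the proof of Theorem \ref{mixedmultifractaldimensions1} relies), while yours is more economical in tools, running all four bounds off the single covering lemma already used in Section 3 with no appeal to Vitali. A point in your favour: you spell out the step the paper compresses into ``and thus, the desired inequality follows'', namely that the localized estimate for all $F\subseteq E$ must be combined with an arbitrary countable cover $E\subseteq\bigcup_iE_i$, the subadditivity $\theta(E)\leq\sum_i\theta(E\cap E_i)$, and monotonicity of ${\overline{\mathcal{P}}}_{\mu,\nu}^{q,t}$, before the infimum over covers converts the prepacking measure ${\overline{\mathcal{P}}}_{\mu,\nu}^{q,t}$ into $\mathcal{P}_{\mu,\nu}^{q,t}$; as you correctly observe, a bound by ${\overline{\mathcal{P}}}_{\mu,\nu}^{q,t}(E)$ alone would be too weak. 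Your use of open supersets and outer regularity of $\theta$, where the paper uses closed sets $F$ and shrinking neighbourhoods $B_\delta(F)$, is an equivalent bookkeeping device.
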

\begin{remark}\label{remark1}
Whenever $\mu,\nu\in P_{D}(\mathbb{R}^{d})$, we may choose $C_1=C_2=1$ in Theorem \ref{mixeddensityestimatestheorem1}.
\end{remark}
As a result of the estimations above of the new mixed multifractal densities, we aim in the next step to show that such estimations permit in some special cases to compute the mixed multifractal spectrum for some suitable vector-valued measures in the mixed framework. For a Borel set $E\subset\mathbb{R}^{d}$, define
$$
{\overline{D}}_{\mu,\nu}^{q,t}(x,E)={\overline{d}}_{\mu,\nu}^{q,t}(x,\mathcal{H}_{\mu,\nu}^{q,s}(E))
$$
and
$$
{\underline{D}}_{\mu,\nu}^{q,t}(x,E)={\underline{d}}_{\mu,\nu}^{q,t}(x,\mathcal{H}_{\mu,\nu}^{q,s}(E)).
$$
Define similarly
$$
{\overline{\Delta}}_{\mu,\nu}^{q,t}(x,E)={\overline{d}}_{\mu,\nu}^{q,t}(x,\mathcal{P}_{\mu,\nu}^{q,t}(E))
$$
and
$$
{\underline{\Delta}}_{\mu,\nu}^{q,t}(x,E)={\underline{d}}_{\mu,\nu}^{q,t}(x,\mathcal{P}_{\mu,\nu}^{q,t}(E)).
$$
As usually, whenever
$$
{\overline{D}}_{\mu,\nu}^{q,t}(x,E)={\underline{D}}_{\mu,\nu}^{q,t}(x,E)
$$
we denote their common value by ${D}_{\mu,\nu}^{q,t}(x,E)$. Similarly, whenever
$$
{\overline{\Delta}}_{\mu,\nu}^{q,t}(x,E)={\underline{\Delta}}_{\mu,\nu}^{q,t}(x,E)
$$
we denote their common value by ${\Delta}_{\mu,\nu}^{q,t}(x,E)$. Denote next
$$
\underline{K}{=}\{\,x\in\,E,\;{\underline{D}}_{\mu,\nu}^{q,t}(x,E)=1\},\;\;
{\overline{K}}{=}\{\,x\in\,E,\;{\overline{D}}_{\mu,\nu}^{q,t}(x,E)=1\},
$$
$$
{\underline{T}}{=}\{\,x\in\,E,\;{\underline{\Delta}}_{\mu,\nu}^{q,t}(x,E)=1\},\;\;
{\overline{T}}{=}\{\,x\in\,E,\;{\overline{\Delta}}_{\mu,\nu}^{q,t}(x,E)=1\},
$$
$$
K={\underline{K}\cap\overline{K}}\;\text{\ and }\;T={\overline{T}\cap\underline{T}}.
$$
The multifractal dimensions of these sets are provided in the following theorem. 
\begin{theorem}\label{mixedmultifractaldimensions1}
Let $E\subset{S_\mu\cap S_\nu}$ be a Borel set.
\begin{enumerate}
\item If $\mathcal{H}_{\mu,\nu}^{q,t}(E)<\infty$ and $\mu,\nu\in P_{D}(\mathbb{R}^{d})$, then $dim_{\mu,\nu}^{q}({\overline{K}})=t$.
\item If $\mathcal{P}_{\mu,\nu}^{q,t}(E)<\infty $, then $Dim_{\mu,\nu}^{q}({\underline{T}})=t$.
\item If $\mathcal{P}_{\mu,\nu}^{q,t}(E)<\infty $ and $\mu,\nu\in P_{D}(\mathbb{R}^{d})$, we have the following equivalences.
\begin{description}
\item[a.] $\mathcal{H}_{\mu,\nu}^{q,t}=\mathcal{P}_{\mu,\nu}^{q,t}$.
\item[b.] ${\overline{D}}_{\mu,\nu}^{q,t}(x,E)={\underline{D}}_{\mu,\nu}^{q,t}(x,E)=1$, for $\mathcal{P}_{\mu,\nu}^{q,t}-a.a.x\in E$.
\item[c.] ${\overline{\Delta}}_{\mu,\nu}^{q,t}(x,E)={\underline{\Delta}}_{\mu,\nu}^{q,t}(x,E)=1$, for $\mathcal{P}_{\mu,\nu}^{q,t}-a.a$.$x\in E$.
\end{description}
\item If $\mathcal{H}_{\mu,\nu}^{q,t}=\mathcal{P}_{\mu,\nu}^{q,t}<\infty$ and $\mu,\nu\in P_{D}(\mathbb{R}^{d})$, then $dim_{\mu,\nu}^{q}({K})=dim_{\mu,\nu}^{q}({T})=t$.
\end{enumerate}
\end{theorem}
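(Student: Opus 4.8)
The plan is to read all four items off the density comparison estimates of Theorem~\ref{mixeddensityestimatestheorem1}, applied not to an arbitrary $\theta$ but to the two self-referential set functions $\theta_{H}(\cdot)=\mathcal{H}_{\mu,\nu}^{q,t}(E\cap\,\cdot)$ and $\theta_{P}(\cdot)=\mathcal{P}_{\mu,\nu}^{q,t}(E\cap\,\cdot)$, which are finite Borel measures under the standing finiteness hypotheses. Wherever $\mu,\nu\in P_{D}(\mathbb{R}^{d})$ is assumed, Remark~\ref{remark1} lets me take $C_{1}=C_{2}=1$, so the estimates become sharp. The basic mechanism is a level-set argument: for $A\subseteq E$ one has $\theta_{H}(A)=\mathcal{H}_{\mu,\nu}^{q,t}(A)$ and $\theta_{P}(A)=\mathcal{P}_{\mu,\nu}^{q,t}(A)$, so feeding $A$ into \eqref{mixeddensityestimatestheorem1eq1} or \eqref{mixeddensityestimatestheorem1eq2} converts a bound on the density $x\mapsto{\overline{D}}_{\mu,\nu}^{q,t}(x,E)$ (resp.\ ${\underline{\Delta}}_{\mu,\nu}^{q,t}(x,E)$) over $A$ into a self-referential inequality for $\mathcal{H}_{\mu,\nu}^{q,t}(A)$ (resp.\ $\mathcal{P}_{\mu,\nu}^{q,t}(A)$) that, by finiteness, can hold only if that measure vanishes.

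For item~1, $\overline{K}\subseteq E$ together with monotonicity and the finiteness hypothesis give $\mathcal{H}_{\mu,\nu}^{q,t}(\overline{K})<\infty$, hence $dim_{\mu,\nu}^{q}(\overline{K})\le t$ via the cut-off characterisation of Theorem~\ref{Theorem1Part1}. For the reverse inequality I first show $\theta_{H}(E\setminus\overline{K})=0$: stratifying $E\setminus\overline{K}$ into the super-level sets $\{{\overline{D}}_{\mu,\nu}^{q,t}(\cdot,E)>1+1/n\}$ and sub-level sets $\{{\overline{D}}_{\mu,\nu}^{q,t}(\cdot,E)<1-1/n\}$, the lower bound in \eqref{mixeddensityestimatestheorem1eq1} applied to the former and the upper bound applied to the latter force each piece to have vanishing $\mathcal{H}_{\mu,\nu}^{q,t}$-measure (it is $C_{1}=C_{2}=1$ that makes the strict factor $1\pm1/n$ contradict finiteness). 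Thus $\mathcal{H}_{\mu,\nu}^{q,t}(\overline{K})=\mathcal{H}_{\mu,\nu}^{q,t}(E)$, which is positive in the non-degenerate case, and the cut-off lemma then gives $\mathcal{H}_{\mu,\nu}^{q,s}(\overline{K})=+\infty$ for every $s<t$, i.e.\ $dim_{\mu,\nu}^{q}(\overline{K})\ge t$. Item~2 is the packing analogue: monotonicity and finiteness give $Dim_{\mu,\nu}^{q}(\underline{T})\le t$, and the same level-set argument run with \eqref{mixeddensityestimatestheorem1eq2} and $\theta_{P}$ yields $\mathcal{P}_{\mu,\nu}^{q,t}(\underline{T})=\mathcal{P}_{\mu,\nu}^{q,t}(E)>0$, whence $Dim_{\mu,\nu}^{q}(\underline{T})\ge t$. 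The point requiring care is that here the self-referential lower density is pinned to $1$ from both sides directly by the extremal (supremum-over-packings) nature of $\mathcal{P}_{\mu,\nu}^{q,t}$, which is why item~2 needs no doubling hypothesis.

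Item~3 is, in my view, the \emph{main obstacle}, and I would prove it as a cycle. For (a)$\Rightarrow$(b), if $\mathcal{H}_{\mu,\nu}^{q,t}=\mathcal{P}_{\mu,\nu}^{q,t}$ then $\theta_{H}=\theta_{P}$, and running the level-set argument of item~1 simultaneously with both \eqref{mixeddensityestimatestheorem1eq1} and \eqref{mixeddensityestimatestheorem1eq2} squeezes ${\overline{D}}_{\mu,\nu}^{q,t}$ and ${\underline{D}}_{\mu,\nu}^{q,t}$ together and pins their common value to $1$ for $\mathcal{P}_{\mu,\nu}^{q,t}$-a.a.\ $x$. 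The equivalence (b)$\Leftrightarrow$(c) is a translation: under $\mathcal{H}_{\mu,\nu}^{q,t}=\mathcal{P}_{\mu,\nu}^{q,t}$ the numerators defining ${\overline{D}}_{\mu,\nu}^{q,t},{\underline{D}}_{\mu,\nu}^{q,t}$ and ${\overline{\Delta}}_{\mu,\nu}^{q,t},{\underline{\Delta}}_{\mu,\nu}^{q,t}$ agree $\mathcal{P}_{\mu,\nu}^{q,t}$-a.e., so the two normalised densities coincide a.e. The hard direction is (c)$\Rightarrow$(a): starting from ${\underline{\Delta}}_{\mu,\nu}^{q,t}={\overline{\Delta}}_{\mu,\nu}^{q,t}=1$ a.e., I would use \eqref{mixeddensityestimatestheorem1eq2} with $C_{1}=C_{2}=1$ to control $\mathcal{P}_{\mu,\nu}^{q,t}$ on every Borel piece, and then combine the general inequality $\mathcal{H}_{\mu,\nu}^{q,t}\le\mathcal{P}_{\mu,\nu}^{q,t}$ with \eqref{mixeddensityestimatestheorem1eq1} to produce the reverse bound $\mathcal{P}_{\mu,\nu}^{q,t}(\cdot)\le\mathcal{H}_{\mu,\nu}^{q,t}(\cdot)$, upgrading it to equality of measures. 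Simultaneously managing the two families of constants and passing from a.e.\ density identities back to an identity of outer measures on all Borel sets is the delicate step.

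Finally, item~4 is a synthesis. Under $\mathcal{H}_{\mu,\nu}^{q,t}=\mathcal{P}_{\mu,\nu}^{q,t}<\infty$ and doubling, item~3 gives ${\overline{D}}_{\mu,\nu}^{q,t}={\underline{D}}_{\mu,\nu}^{q,t}=1$ and ${\overline{\Delta}}_{\mu,\nu}^{q,t}={\underline{\Delta}}_{\mu,\nu}^{q,t}=1$ for a.a.\ $x\in E$, so $K=\underline{K}\cap\overline{K}$ and $T=\overline{T}\cap\underline{T}$ both carry full $\mathcal{P}_{\mu,\nu}^{q,t}$-measure, equal to the full $\mathcal{H}_{\mu,\nu}^{q,t}$-measure. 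Since the two global measures coincide and are finite, the Hausdorff and packing cut-off values of Theorem~\ref{Theorem1Part1} agree, and computing $dim_{\mu,\nu}^{q}(K)$ and $dim_{\mu,\nu}^{q}(T)$ reduces, exactly as in items~1 and~2, to the facts that these sets are subsets of $E$ (giving $\le t$) and carry positive $\mathcal{H}_{\mu,\nu}^{q,t}$-measure (giving $\ge t$ through the cut-off lemma), yielding $dim_{\mu,\nu}^{q}(K)=dim_{\mu,\nu}^{q}(T)=t$.
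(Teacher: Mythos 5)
Your overall strategy is the same as the paper's: apply Theorem~\ref{mixeddensityestimatestheorem1} (with $C_{1}=C_{2}=1$ via Remark~\ref{remark1}) to the self-referential measures $\theta_{H}=\mathcal{H}_{\mu,\nu}^{q,t}(E\cap\,\cdot)$ and $\theta_{P}=\mathcal{P}_{\mu,\nu}^{q,t}(E\cap\,\cdot)$, stratify into the level sets $\{\cdot>1+1/m\}$ and $\{\cdot<1-1/m\}$, force each stratum to be null by the self-referential inequality, and read the dimension off the cut-off characterisation of Theorem~\ref{Theorem1Part1}. Items 1, 2 and 4, as well as the implications (a)$\Rightarrow$(b) and (c)$\Rightarrow$(a) of item 3, match the paper's argument; your side remark that item 2 requires no doubling hypothesis is also consistent with the paper, since the packing-side estimate (\ref{mixeddensityestimatestheorem1eq2}) is obtained there without the Besicovitch constant.

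There is, however, a genuine gap in item 3: you prove the equivalence (b)$\Leftrightarrow$(c) \emph{under} the assumption $\mathcal{H}_{\mu,\nu}^{q,t}=\mathcal{P}_{\mu,\nu}^{q,t}$, i.e.\ assuming (a). Your three steps therefore amount to (a)$\Rightarrow$(b), (a)$\Rightarrow$\bigl((b)$\Leftrightarrow$(c)\bigr) and (c)$\Rightarrow$(a); combining them yields (a)$\Leftrightarrow$(c) and (a)$\Rightarrow$(b), but nothing lets you start from (b) alone and recover (a) or (c), so the three statements are not shown to be equivalent. The problem is intrinsic: the ``translation'' between the $D$- and $\Delta$-densities requires their numerators $\mathcal{H}_{\mu,\nu}^{q,t}(E\cap B(x,r))$ and $\mathcal{P}_{\mu,\nu}^{q,t}(E\cap B(x,r))$ to agree, which is exactly (a) restricted to $E$, and this cannot be presupposed at that point of the cycle. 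The repair is what the paper does for (b)$\Rightarrow$(c): derive the measure equality \emph{from} (b) first. Since (b) gives ${\underline{D}}_{\mu,\nu}^{q,t}(x,E)=1$ for $\mathcal{P}_{\mu,\nu}^{q,t}$-a.a.\ $x\in E$, and ${\underline{d}}_{\mu,\nu}^{q,t}(x,\theta_{H})={\underline{D}}_{\mu,\nu}^{q,t}(x,E)$ by definition, apply (\ref{mixeddensityestimatestheorem1eq2}) with $\theta=\theta_{H}$ and $C_{1}=C_{2}=1$ to Borel sets $A$ contained in the full-measure subset of $E$ where this density equals $1$: this gives $\mathcal{P}_{\mu,\nu}^{q,t}(A)\leq\theta_{H}(A)=\mathcal{H}_{\mu,\nu}^{q,t}(A)\leq\mathcal{P}_{\mu,\nu}^{q,t}(A)$, hence $\mathcal{H}_{\mu,\nu}^{q,t}=\mathcal{P}_{\mu,\nu}^{q,t}$ on Borel subsets of $E$ (the paper records this as equality on balls); only after this is your translation to (c) legitimate. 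Equivalently, you could close the cycle by proving (b)$\Rightarrow$(a) directly, by the same argument you sketched for (c)$\Rightarrow$(a) with the roles of (\ref{mixeddensityestimatestheorem1eq1}) and (\ref{mixeddensityestimatestheorem1eq2}) exchanged.
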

This result is important as it constitutes a first information leading to the computation of the multifractal spectrum due to the densities introduced. Indeed, related to the origins of the multifractal spectrum, such as in \cite{Bmabrouk1}, \cite{Bmabrouk3}, \cite{Cole}, \cite{Cole-Olsen}, \cite{Farhatetal}, \cite{Menceuretal}, \cite{Olsen1}, a starting point in the classical case is to establish an estimation of the form
$$
\theta(B(x,r))\sim(\mu(B(x,r))^{q}(2r)^{t\pm\varepsilon},\;\;r\rightarrow0,
$$
which by considering a somehow H\"older probability measure
$$
\nu(B(x,r))\sim(2r)^{t\pm\varepsilon},\;\;r\rightarrow0
$$
means that the densities considered above are all equal to 1. These last assumptions permit to compute the multifractal spectrum (evaluated as the Hausdorff dimension of the density level sets) by means of a Legendre transformation of a convex function issued from the multifractal generalized dimensions $b_{\mu,\nu}$, $B_{\mu,\nu}$ and $\Delta_{\mu,\nu}$.

In the following part, we aim to provide in a preparatory step a characterization of the sets of points in the support(s) of the relative measure(s) with the same density.
\begin{theorem}\label{theorem3}
Let $\mu$,$\nu $ $\in P_{D}(\mathbb{R}^{d})$ and $E\subset S_\mu\cap S_\nu$ be a Borel set. Let also
$$
E_{1}=\{\ x\in E,{\overline{D}}_{\mu,\nu}^{q,t}(x,E)={\underline{D}}_{\mu,\nu}^{q,t}(x,E)\}
$$
and
$$
E_{2}=\{\ x\in E,{\overline{\Delta}}_{\mu,\nu}^{q,t}(x,E)={\underline{\delta}}_{\mu,\nu}^{q,t}(x,E)\}.
$$
{1) If }$\mathcal{H}_{\mu,\nu}^{q,t}(E)<\infty $ then\\
a) ${\overline{D}}_{\mu,\nu}^{q,t}(x,E_{1})={\underline{D}}_{\mu,\nu}^{q,t}(x,E_{1}),$ for $\mathcal{H}_{\mu,\nu}^{q,t}-a.$ $e$ on $E_{1}.$\\
b) ${\overline{D}}_{\mu,\nu}^{q,t}(x,E\diagdown E_{1})={\underline{D}}_{\mu,\nu}^{q,t}(x,E\diagdown E_{1}),$ for $\mathcal{H}_{\mu,\nu}^{q,t}-a.$ $e$ on $E\diagdown E_{1}.$\\
2) {If }$\mathcal{P}_{\mu,\nu}^{q,t}(E)<\infty $ then\\
a) ${\overline{\Delta}}_{\mu,\nu}^{q,t}(x,E_{2})={\underline{\delta}}_{\mu,\nu}^{q,t}(x,E_{2}),$ for $\mathcal{P}_{\mu,\nu}^{q,t}-a.$ $e$ on $E_{2}.$\\
b) ${\overline{\delta}}_{\mu,\nu}^{q,t}(x,E\diagdown E_{2})={\underline{\delta}}_{\mu,\nu}^{q,t}(x,E\diagdown E_{2}),$ for $\mathcal{P}_{\mu,\nu}^{q,t}-a.$ $e$ on $E\diagdown E_{2}.$
\end{theorem}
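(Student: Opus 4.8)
The plan is to read every quantity in the statement as a gauge-normalised density of a restriction measure. For a Borel set $A\subseteq E$ write $\mathcal H_A(\cdot)=\mathcal H_{\mu,\nu}^{q,t}(A\cap\cdot)$ and $\mathcal P_A(\cdot)=\mathcal P_{\mu,\nu}^{q,t}(A\cap\cdot)$ for the associated restriction measures, and let $\varphi(x,r)=(\mu(B(x,r)))^{q}(\nu(B(x,r)))^{t}$ be the common gauge of Definition \ref{mixeddensitiesdef}. Then $\overline D_{\mu,\nu}^{q,t}(x,A)=\limsup_{r\to0}\mathcal H_A(B(x,r))/\varphi(x,r)$ and $\underline D_{\mu,\nu}^{q,t}(x,A)=\liminf_{r\to0}\mathcal H_A(B(x,r))/\varphi(x,r)$, and similarly $\overline\Delta_{\mu,\nu}^{q,t},\underline\Delta_{\mu,\nu}^{q,t}$ with $\mathcal P_A$ in place of $\mathcal H_A$. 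The whole theorem is then to be read off from one covering lemma together with the additivity $\mathcal H_E=\mathcal H_A+\mathcal H_{E\setminus A}$ of the restriction measures over the single denominator $\varphi$.

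The engine is a \emph{cross-term lemma}: if $\mathcal H_{\mu,\nu}^{q,t}(E)<\infty$ and $A\subseteq E$ is Borel, then for $\mathcal H_{\mu,\nu}^{q,t}$-a.e. $x\in A$ one has $\mathcal H_{E\setminus A}(B(x,r))/\varphi(x,r)\to 0$. I would prove it by fixing $c>0$, forming $F=\{x\in A:\ \limsup_{r\to0}\mathcal H_{E\setminus A}(B(x,r))/\varphi(x,r)>c\}$, and showing $\mathcal H_{\mu,\nu}^{q,t}(F)=0$. At each $x\in F$ the balls with $\mathcal H_{E\setminus A}(B(x,r))>c\,\varphi(x,r)$ form a fine family of arbitrarily small radii; the Besicovitch covering theorem, used exactly as for the $N_B$ sub-collections in the Lemmas of Section 3, extracts a bounded-overlap countable subfamily $\{B(x_i,r_i)\}$ covering $F$. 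Since these are centred $\delta$-coverings, the definition of the gauge Hausdorff measure gives $\overline{\mathcal H}_{\mu,\nu,\delta}^{q,t}(F)\le\sum_i\varphi(x_i,r_i)\le c^{-1}\sum_i\mathcal H_{E\setminus A}(B(x_i,r_i))\le c^{-1}N_B\,\mathcal H_{\mu,\nu}^{q,t}\bigl((E\setminus A)\cap U\bigr)$ for any open $U\supseteq F$. As $F\subseteq A$ is disjoint from $E\setminus A$, the finite measure $\mathcal H_{E\setminus A}$ charges $F$ by zero, so its outer regularity yields $U\supseteq F$ with $\mathcal H_{E\setminus A}(U)$ arbitrarily small, whence $\mathcal H_{\mu,\nu}^{q,t}(F)=0$; letting $c\downarrow0$ finishes the lemma. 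The packing version is identical, the Vitali selection defining $\mathcal P_{\mu,\nu}^{q,t}$ replacing Besicovitch under $\mathcal P_{\mu,\nu}^{q,t}(E)<\infty$; here the hypothesis $\mu,\nu\in P_D(\mathbb R^d)$ genuinely enters, comparing $\varphi(x,ar_i)$ with $\varphi(x,r_i)$ across the selected packing and letting the estimate pass through the infimum over partitions. Applying the lemma with $A=E_1$ then gives, at a.e. $x\in E_1$, $\overline D_{\mu,\nu}^{q,t}(x,E_1)=\overline D_{\mu,\nu}^{q,t}(x,E)$ and $\underline D_{\mu,\nu}^{q,t}(x,E_1)=\underline D_{\mu,\nu}^{q,t}(x,E)$; these agree by the very definition of $E_1$, which is assertion 1a, and 2a follows the same way from the packing lemma applied to $E_2$.

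The step I expect to be the real obstacle is 1b (and its packing twin 2b). The naive route, applying the cross-term lemma with $A=E\setminus E_1$, only identifies $\overline D_{\mu,\nu}^{q,t}(x,E\setminus E_1)$ and $\underline D_{\mu,\nu}^{q,t}(x,E\setminus E_1)$ with the \emph{ambient} densities $\overline D_{\mu,\nu}^{q,t}(x,E)$, $\underline D_{\mu,\nu}^{q,t}(x,E)$, which on $E\setminus E_1$ do not coincide; thus 1b cannot be deduced from cross-term vanishing alone. The only way I see to rescue it is to prove first that the ambient gauge-normalised density exists $\mathcal H_{\mu,\nu}^{q,t}$-almost everywhere, i.e. $\mathcal H_{\mu,\nu}^{q,t}(E\setminus E_1)=0$, after which 1b becomes vacuous and $E$ is, up to a null set, exhausted by $E_1$. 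Establishing this a.e.-existence is the crux, and it is exactly where the doubling hypothesis and the sharp two-sided bounds of Theorem \ref{mixeddensityestimatestheorem1} (with $C_1=C_2=1$ by Remark \ref{remark1}) must be brought to bear: for rationals $0<a<b$ I would bound $\mathcal H_{\mu,\nu}^{q,t}(S_{a,b})$, where $S_{a,b}=\{x\in E:\underline D_{\mu,\nu}^{q,t}(x,E)<a<b<\overline D_{\mu,\nu}^{q,t}(x,E)\}$, by running the covering argument against $\varphi$ from below and from above, using a doubling comparison $\varphi(x,2r)\le c_0\,\varphi(x,r)$ to pass between scales and Theorem \ref{mixeddensityestimatestheorem1} to convert the scale-oscillation into an inequality of the form $b\,\mathcal H_{\mu,\nu}^{q,t}(S_{a,b})\le a\,\mathcal H_{\mu,\nu}^{q,t}(S_{a,b})$, forcing $\mathcal H_{\mu,\nu}^{q,t}(S_{a,b})=0$; summing over rationals gives $\mathcal H_{\mu,\nu}^{q,t}(E\setminus E_1)=0$. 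The difficulty I flag honestly is that $\varphi$ is a gauge, not a measure, so this Besicovitch-type differentiation does not transcribe verbatim and its success rests entirely on the doubling control of $\varphi$ across scales; the packing assertions 2a, 2b are then obtained by the verbatim argument with $\mathcal P_{\mu,\nu}^{q,t}$, $\overline\Delta_{\mu,\nu}^{q,t},\underline\Delta_{\mu,\nu}^{q,t}$ and the packing cross-term lemma.
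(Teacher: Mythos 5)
Your cross-term lemma, and the way you deduce assertion 1a from it, is exactly the paper's own argument: the paper's claim (\ref{proofthm3eq1}) \emph{is} your lemma, proved there by writing $\theta_{F}(A)=\theta(F\cap A)$, $\lambda_{F}(A)=\theta(A\cap F^{c})$ and killing ${\overline{d}}_{\mu,\nu}^{q,t}(x,\lambda_{F})$ for $\mathcal{H}_{\mu,\nu}^{q,t}$-a.e. $x\in F$ via the left-hand inequality of Theorem \ref{mixeddensityestimatestheorem1} applied to $G_{k}=\{x:{\overline{d}}_{\mu,\nu}^{q,t}(x,\lambda_{F})\geq 1/k\}$ (the constants are irrelevant there because $\lambda_{F}(G_{k})=0$); your Besicovitch-plus-outer-regularity argument re-derives that same inequality in the special case at hand. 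So your 1a, and the overall reading of the densities as densities of restriction measures over the gauge $\varphi$, match the paper. One caveat: your assertion that the packing version is ``identical'' is too quick, since in Theorem \ref{mixeddensityestimatestheorem1} the packing measure is paired with \emph{lower} densities while the cross-term needed for 2a is an \emph{upper} density, and Vitali selection does not repair this mismatch; that gap, however, is shared by the paper itself, which dismisses part 2 with ``similar techniques''.

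The genuine error is your treatment of 1b. You correctly observe that the cross-term lemma gives, a.e. on $E\setminus E_{1}$, ${\overline{D}}_{\mu,\nu}^{q,t}(x,E\setminus E_{1})={\overline{D}}_{\mu,\nu}^{q,t}(x,E)$ and ${\underline{D}}_{\mu,\nu}^{q,t}(x,E\setminus E_{1})={\underline{D}}_{\mu,\nu}^{q,t}(x,E)$, with the ambient densities unequal there; but your rescue, namely proving $\mathcal{H}_{\mu,\nu}^{q,t}(E\setminus E_{1})=0$, is an attempt to prove a false statement, so no refinement of the sketched differentiation argument can close it. Under the hypotheses of part 1 only the \emph{upper} density is pinned down (the proof of assertion 1 of Theorem \ref{mixedmultifractaldimensions1} gives ${\overline{D}}_{\mu,\nu}^{q,t}(x,E)=1$ a.e.); the lower density is not controlled by $\mathcal{H}_{\mu,\nu}^{q,t}$ at all, precisely because upper densities go with $\mathcal{H}_{\mu,\nu}^{q,t}$ and lower densities with $\mathcal{P}_{\mu,\nu}^{q,t}$ in Theorem \ref{mixeddensityestimatestheorem1}, and $\mathcal{P}_{\mu,\nu}^{q,t}(E)$ may be infinite. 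Your scheme needs $\theta(S_{a,b})\leq a\,\mathcal{H}_{\mu,\nu}^{q,t}(S_{a,b})$, but what the theorem actually yields from the lower density is $\theta(S_{a,b})\leq a\,\mathcal{P}_{\mu,\nu}^{q,t}(S_{a,b})$, which is vacuous against $b\,\mathcal{H}_{\mu,\nu}^{q,t}(S_{a,b})\leq\theta(S_{a,b})$ since $\mathcal{H}_{\mu,\nu}^{q,t}\leq\mathcal{P}_{\mu,\nu}^{q,t}$. Concretely: take $d=1$, $k=1$, $q=0$, $\mu=\nu$ the uniform measure on $[0,1]$ (doubling), $t=\log 2/\log 3$, and $E$ a ternary Cantor set placed in the interior of $[0,1]$; then the gauge is eventually exactly $(2r)^{t}$ at each point, $0<\mathcal{H}_{\mu,\nu}^{0,t}(E)<\infty$, and since any a.e.-existing density value must equal $1$, Marstrand's density theorem (no positive finite density with respect to $r^{t}$, $t\notin\mathbb{Z}$, exists on a set of positive measure) forces $\mathcal{H}_{\mu,\nu}^{0,t}(E_{1})=0$, i.e. $E\setminus E_{1}$ has \emph{full} measure. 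The correct way out is to recognize that 1b as printed is a misprint for ``$\neq$'' (this is the form in Cole--Olsen and in Attia et al., the results this theorem extends): almost every point of the irregular part remains irregular \emph{relative to the irregular part}. In that corrected form, 1b follows instantly from the identities your cross-term lemma already provides together with the definition of $E_{1}$; no new a.e.-differentiation theorem is needed, and none is available.
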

\section{Proof of main results}
\subsection{Proof of Theorem \ref{mixeddensityestimatestheorem1}.}
We firstly show the left-hand side inequality of (\ref{mixeddensityestimatestheorem1eq1}). So, denote $\overline{d}=\underset{x\in E}{\inf}{\overline{d}}_{\mu,\nu}^{q,t}(x,\theta)$. Whenever $\overline{d}=0$, the inequality is obvious. So, assume that $\overline{d}>0$, and let $\eta$ be such that $0<\eta<\overline{d}$, and denote $\overline{d}_\eta=\overline{d}-\eta$. Let finally $F\subset E$ be closed, and $H\subset F$. Finally, for $\delta>0$, let
$$
B_{\delta}(F)=\{\ x\in\mathbb{R}^{d},\;dist(F,x)\leq\delta\}.
$$
It is straightforward that $B_{\delta}(F)\downarrow F$ whenever $\delta\downarrow0$. Therefore, for all $\varepsilon>0$, there is $\delta_{0}>0$ satisfying
$$
\theta(B_{\delta}(F))\leq\theta(F)+\varepsilon,\;\forall\,\delta,\;0<\delta<\delta_{0}.
$$
Besides, as $\mathcal{H}_{\mu,\nu}^{q,t}(H)<\infty$, we may also write
$$
{\overline{\mathcal{H}}}_{\mu,\nu}^{q,t}(H)-\varepsilon\leq{\overline{\mathcal{H}}}_{\mu,\nu,\delta}^{q,t}(H),\;\forall\,\delta,\;0<\delta<\delta_{0}.
$$
Denote next
$$
\Gamma_{\mu,\nu}^{q,t}(B(x,r))=[\mu(B(x,r))]^{q}[\nu(B(x,r))]^{t}
$$
and 
$$
\mathcal{B}_{\delta}=\left\{B(x,r);\,x\in H,\,0<r<\delta,\;\;\theta(B(x,r))\geq\overline{d}_\eta\Gamma_{\mu,\nu}^{q,t}(B(x,r))\right\}.
$$
From the definition of $\overline{d}$ and $\overline{d}_\eta$, there exists $\delta_0>0$ such that
$$
\displaystyle\frac{\left[\theta(B(x,r)\right]}{(\mu(B(x,r)))^{q}(\nu(B(x,r))^{t}}\geq\overline{d}_\eta;\;\;\forall r,\,0<r<\delta_0.
$$
Or equivalently,
$$
\theta(B(x,r))\geq\overline{d}_\eta\Gamma_{\mu,\nu}^{q,t}(B(x,r));\;\;\forall r,\,0<r<\delta_0.
$$
Henceforth, $\mathcal{B}_\delta\not=\emptyset$. So, let next $N_B$ be the number of at most countable collections $(\mathcal{B}_{i})_{1\leq i\leq N_B}=(B(x_{ij},r_{ij}))_{j,1\leq i\leq N_B}$ of $\mathcal{B}_\delta$ obtained from the Besicovitch covering theorem, where, for all $i$, $\mathcal{B}_{i}$ is composed of pairwise disjoint balls $B(x_{ij},r_{ij})$. We have,
$$
H\subset\bigcup_{1\leq i\leq N_B}\bigcup_jB(x_{ij},r_{ij}).
$$
It follows that
$$
\begin{array}{lll}
\displaystyle{\overline{\mathcal{H}}}_{\mu,\nu,\delta}^{q,t}(H) &\leq&{\overline{\mathcal{H}}}_{\mu,\nu,\delta}^{q,t}\,\bigl(\displaystyle\bigcup_{i=1}^{N_B}\bigcup_jB(x_{ij},r_{ij})\bigr)\\
&\leq&\displaystyle\sum_{i=1}^{N_B}{\overline{\mathcal{H}}}_{\mu,\nu,\delta}^{q,t}\,\bigl(\displaystyle\bigcup_jB(x_{ij},r_{ij})\bigr)\\
&\leq&\displaystyle\sum_{i=1}^{N_B}\sum_j[\mu(B(x_{ij},r_{ij}))]^{q}[\nu(B(x_{ij},r_{ij}))]^{q}\\
&\leq&\displaystyle\frac{1}{\overline{d}_\eta}\sum_{i=1}^{N_B}\sum_j\theta(B(x_{ij},r_{ij}))\\
&\leq&\displaystyle\frac{1}{\overline{d}_\eta}\sum_{i=1}^{N_B}\theta(B_\delta(F))\\
&\leq&\displaystyle\frac{N_B}{\overline{d}_\eta}\theta(B_\delta(F)).
\end{array}
$$
Consequently, we obtain
$$
\mathcal{H}_{\mu,\nu}^{q,t}(H)\leq{\overline{\mathcal{H}}}_{\mu,\nu,\delta}^{q,t}(H)+\varepsilon\leq\displaystyle\frac{N_B}{\overline{d}_\eta}\theta(B_\delta(F))+\varepsilon.
$$
As $\varepsilon\rightarrow0$, and observing that $\overline{d}_\eta=\overline{d}-\eta$, we get
$$
(\overline{d}-\eta)\mathcal{H}_{\mu,\nu}^{q,t}(H)\leq\,N\theta(B_\delta(F));\;\;\forall\eta>0,\;0<\eta<\overline{d}.
$$
By letting $\eta\downarrow0$, we get
$$
\overline{d}\mathcal{H}_{\mu,\nu}^{q,t}(H)\leq\,N_B\theta(B_\delta(F)).
$$
Now, for $\delta\downarrow0$, and taking the sup on $H\subset F$, we obtain
$$
\overline{d}\mathcal{H}_{\mu,\nu}^{q,t}(F)\leq\,N_B\theta(F).
$$
This is valid for all closed $F\subset E$. As a result, taking the sup on $F$, and replacing $\overline{d}$ by its exact form, we obtain
$$
\mathcal{H}_{\mu,\nu}^{q,t}(E)\underset{x\in E}{\inf}{\overline{d}}_{\mu,\nu}^{q,t}(x,\theta)\leq\,N_B\theta(E).
$$
We now proceed to show the right-hand side part of inequality (\ref{mixeddensityestimatestheorem1eq1}). As previously, let $\overline{D}=\displaystyle\sup_{x\in E}\overline{d}_{\mu,\nu}^{q,t}(x,\theta)$, $\eta>0$ and denote $\overline{D}+\eta$. For $\delta>0$, consider the set
$$
E_{\delta}=\{x\in E;\;\overline{D}_\eta\Gamma_{\mu,\nu}^{q,t}(B(x,r))\geq\theta(B(x,r),\;0<r<\delta\}.
$$
It is straightforward that for all $\delta>0$,
$$
{\overline{\mathcal{H}}}_{\mu,\nu}^{q,t}(E_{\delta})\leq\mathcal{H}_{\mu,\nu}^{q,t}(E_{\delta})<\infty.
$$
On the other hand, for all $\varepsilon>0$, let $(B(x_{i},r_{i}))_{i}$ be a $\delta$-covering of $E_{\delta}$ satisfying
$$
\displaystyle\sum_i[\mu(B(x_{i},r_{i}))]^q[\nu(B(x_{i},r))]^t\leq{\overline{\mathcal{H}}}_{\mu,\nu,\delta}^{q,t}(E_{\delta})+\varepsilon.
$$
We get
$$
\begin{array}{lll}
\theta(E_{\delta})&\leq&\theta(\bigcup_iB(x_{i},r_{i}))\\
&\leq&\displaystyle\sum_i\theta(B(x_{i},r_{i}))\\
&\leq&\overline{D}_\eta\displaystyle\sum_i[\mu(B(x_{i},r_{i}))]^q[\nu(B(x_{i},r))]^t\\
&\leq&\overline{D}_\eta\left[{\overline{\mathcal{H}}}_{\mu,\nu,\delta}^{q,t}(E_{\delta})+\varepsilon\right].
\end{array}
$$
As $E_\delta\subset E$, we get
$$
\theta(E_{\delta})\leq\overline{D}_\eta\left[\mathcal{H}_{\mu,\nu}^{q,t}(E)+\varepsilon\right].
$$
Whenever $\delta\downarrow0$, we obtain
$$
\theta(E)\leq\overline{D}_\eta\left[\mathcal{H}_{\mu,\nu}^{q,t}(E)+\varepsilon\right].
$$
This is true for all $\varepsilon,\eta>0$. Consequently,
$$
\theta(E)\leq\overline{D}\mathcal{H}_{\mu,\nu}^{q,t}(E),
$$
or equivalently
$$
\theta(E)\leq\displaystyle\sup_{x\in E}\overline{d}_{\mu,\nu}^{q,t}(x,\theta)\mathcal{H}_{\mu,\nu}^{q,t}(E).
$$
Now, we will prove the left-hand side of (\ref{mixeddensityestimatestheorem1eq2}). Denote similarly to the previous case $\underline{d}=\displaystyle\inf_{x\in E}{\underline{d}}_{\mu,\nu}^{q,t}(x,\theta)$, and assume here-also that $\underline{d}>0$. Let next $\eta$ be such that $0<\eta<\underline{d}$ and denote $\underline{d}_\eta=\underline{d}-\eta$. Let also $F\subset E$ be closed, and write for $\delta>0$,
$$
B_\delta(F)=\{x\in\mathbb{R}^{d};\; dist(F,x),\,\leq\delta\}.
$$
Let next for $\varepsilon >0$, $\delta_{0}>0$ be such that
$$
\theta(B_{\delta}(F))\leq\theta(F)+\varepsilon;\;\forall\delta,\,0<\delta<\delta_{0}.
$$
Denote next
$$
F_{\delta}=\left\{x\in F;\;\theta(B(x,r))\leq\underline{d}\Gamma_{\mu,\nu}^{q,t}(B(x,r)),\,0<r<\delta\right\}.
$$
Consider a $\delta$-packing $(B(x_{i},r_{i}))_{i}$ of $F_{\delta}$. It holds that
$$
\begin{array}{lll}
\underline{d}_\eta\displaystyle\sum_i[\mu(B(x_{i},r_{i}))]^q[\nu(B(x_{i},r_{i}))]^t
&\leq&\displaystyle\sum_i\theta(B(x_{i},r_{i}))\\
&\leq&\displaystyle\sum_i\theta(B(x_{i},r_{i}))\\
&\leq&\theta(\displaystyle\bigcup_iB(x_{i},r_{i}))\\
&\leq&\theta(B_{\delta}(F))\\
&\leq&\theta(F)+\varepsilon\\
&\leq&\theta(E)+\varepsilon,
\end{array}
$$
which yields that
$$
\underline{d}_\eta\mathcal{P}_{\mu,\nu}^{q,t}(F_{\delta})\leq \underline{d}_\eta{\overline{\mathcal{P}}}_{\mu,\nu}^{q,t}(F_{\delta})\leq\underline{d}_\eta{\overline{\mathcal{P}}}_{\mu,\nu,\delta}^{q,t}(F_{\delta})\leq\theta(E)+\varepsilon.
$$
Finally, letting $\delta$, $\varepsilon$ and $\eta$ $\rightarrow0$, we get
$$
\underline{d}\mathcal{P}_{\mu,\nu}^{q,t}(F)\leq\theta(E).
$$
It remains to check the inequality at the right-hand side of (\ref{mixeddensityestimatestheorem1eq2}). So, denote $\overline{D}=\displaystyle\sup_{x\in E}{\underline{d}}_{\mu,\nu}^{q,t}(x,\theta)$, $F\subset E$, and let $\varepsilon,\eta,\delta>0$ be such that
$$
{\overline{\mathcal{P}}}_{\mu,\nu,\delta}^{q,t}(F)\leq{\overline{\mathcal{P}}}_{\mu,\nu}^{q,t}(F)+\varepsilon.
$$
Denote next $\overline{D}_\eta=\overline{D}+\eta$, and
$$
\mathcal{B}_{\delta}=\left\{\,B(x,r);\;\overline{D}_\eta\Gamma_{\mu,\nu}^{q,t}(B(x,r))\leq\theta(B(x,r)),\,x\in F,\;0<r<\delta\right\}.
$$
Vitali's Theorem (\cite{Falconer}, \cite{Mattila}) implies that  
$$
\theta\left(F\setminus\left(\displaystyle\bigcup_iB(x_{i},r_{i})\right)\right)=0,
$$
for some $\delta$-packing $(B(x_{i},r_{i}))_{i}\subset\mathcal{B}_{\delta}$ of $F$. Furthermore, 
$$
\begin{array}{lll}
\theta(F)&=&\theta(\displaystyle\bigcup_i(F\cap B(x_{i},r_{i}))\\
&=&\theta((F\cap\displaystyle\bigcup_iB(x_{i},r_{i}))\\
&\leq&\displaystyle\sum_i\theta(F\cap B(x_{i},r_{i}))\\
&\leq&\displaystyle\sum_i\theta(B(x_{i},r_{i}))\\
&\leq&\overline{D}_\eta\displaystyle\sum_i[\mu(B(x_{i},r_{i}))]^q[\nu(B(x_{i},r_{i}))]^t\\
&\leq&\overline{D}_\eta{\overline{\mathcal{P}}}_{\mu,\nu,\delta}^{q,t}(F)\\
&\leq&\overline{D}_\eta\left[{\overline{\mathcal{P}}}_{\mu,\nu}^{q,t}(F)+\varepsilon\right].
\end{array}
$$
For $\varepsilon\rightarrow0$, and $\eta\rightarrow0$, we obtain
$$
\theta(F)\leq\overline{D}\,{\overline{\mathcal{P}}}_{\mu,\nu}^{q,t}(F),\;\;\forall F\subset E.
$$
And thus, the desired inequality follows.\vskip0.25cm
\subsection{Proof of Theorem \ref{mixedmultifractaldimensions1}.}
\textbf{1.} We shall prove that
\begin{equation}\label{proofthm2eq1}
0<\mathcal{H}_{\mu,\nu}^{q,t}(\overline{K})<\infty.
\end{equation}
Consider, in a first step, the set
$$
F=\{\ x\in E;{\overline{D}}_{\mu,\nu}^{q,t}(x,E)>1\},
$$
It may be seen as a limit of an increasing sequence of nested sets
$$
F_{m}=\{\ x\in E;{\overline{D}}_{\mu,\nu}^{q,t}(x,E)>1+\frac{1}{m}\},\;\;m\in\mathbb{N},
$$
as $F=$ $\underset{m}{\cup}F_{m}$. Hence, Theorem \ref{mixeddensityestimatestheorem1} (combined with Remark \ref{remark1}) yields that
$$
\mathcal{H}_{\mu,\nu}^{q,t}(F_{m})(1+\frac{1}{m})\leq\mathcal{H}_{\mu,\nu}^{q,t}(F_{m}),\;\forall\,m.
$$
As a result, we obtain
$$
\mathcal{H}_{\mu,\nu}^{q,t}(F_{m})=0,\text{ }\forall \text{ }m.
$$
Consequently,
$$
\mathcal{H}_{\mu,\nu}^{q,t}(F)=0,
$$
and thus
\begin{equation}\label{proofthm2eq2}
{\overline{D}}_{\mu,\nu}^{q,t}(x,E)\leq1,\;\text{ for }\mathcal{H}_{\mu,\nu}^{q,t}-a.\text{ }a.\text{ }x\in E\ .
\end{equation}
Next, consider similarly the set
$$
G\ =\{\ x\in E;{\overline{D}}_{\mu,\nu}^{q,t}(x,E)<1\ \},
$$
and analogously the sequence of nested sets
$$
G_{m}=\{x\in E;{\overline{D}}_{\mu,\nu}^{q,t}(x,E)\leq1-\frac{1}{m}\}.
$$
Again Theorem \ref{mixeddensityestimatestheorem1} (combined with Remark \ref{remark1}) yields that
$$
\mathcal{H}_{\mu,\nu}^{q,t}(G_{m})(1-\frac{1}{m})\geq\mathcal{H}_{\mu,\nu}^{q,t}(G_{m}),\text{ }\forall \text{ }m.
$$
Consequently,
$$
\mathcal{H}_{\mu,\nu}^{q,t}(G_{m})=0,\text{ }\forall \text{ }m.
$$
Since $G=\underset{m}{\cup}G_{m}$, we obtain
\begin{equation}\label{proofthm2eq3}
1\leq{\overline{D}}_{\mu,\nu}^{q,t}(x,E),\text{ for }\mathcal{H}_{\mu,\nu}^{q,t}-a.\text{ }a.\text{ }x\in E\ .
\end{equation}
Equations (\ref{proofthm2eq2}) and (\ref{proofthm2eq3}) yield that
$$
{\overline{D}}_{\mu,\nu}^{q,t}(x,E)=1,\text{ for }\mathcal{H}_{\mu,\nu}^{q,t}-a.\text{ }a.\text{ }x\in E.
$$
As a result,
$$
\mathcal{H}_{\mu,\nu}^{q,t}(\{\ x\in E,\text{ }{\overline{D}}_{\mu,\nu}^{q,t}(x,E)=1\})>0.
$$
Observing now that
$$
\mathcal{H}_{\mu,\nu}^{q,t}(\{\ x\in E,\text{ }{\overline{D}}_{\mu,\nu}^{q,t}(x,E)=1\})\leq\mathcal{H}_{\mu,\nu}^{q,t}(E)<\infty,
$$
it follows that
$$
0<\mathcal{H}_{\mu,\nu}^{q,t}(\{\ x\in E,\text{ }{\overline{D}}_{\mu,\nu}^{q,t}(x,E)=1\})<\infty.
$$
Hence (\ref{proofthm2eq1}) holds, and
$$
dim_{\mu,\nu}^{q}({\overline{K}})=t.
$$
Assertion \textbf{2.} may be checked by similar techniques.\vskip0.15cm\hskip-17pt
\textbf{3.} We shall prove that $a)\Rightarrow b)\Rightarrow c)\Rightarrow a)$.\\
$a)\Rightarrow b).$ From assertion \textbf{1.} above, it follows that
$$
{\overline{D}}_{\mu,\nu}^{q,t}(x,E)=1,\text{ for }\mathcal{H}_{\mu,\nu}^{q,t}-a.\text{ }a.\text{ }x\in E.
$$
So, as $\mathcal{H}_{\mu,\nu}^{q,t}=\mathcal{P}_{\mu,\nu}^{q,t}$, we get
\begin{equation}\label{aimplyiesbeq1}
{\overline{D}}_{\mu,\nu}^{q,t}(x,E)=1,\text{ for }\mathcal{P}_{\mu,\nu}^{q,t}-a.\text{ }a.\text{ }x\in E.
\end{equation}
Now, proceeding as in the proof of equations (\ref{proofthm2eq2}) and (\ref{proofthm2eq3}) in assertion \textbf{1.} above, we get
$$
{\underline{D}}_{\mu,\nu}^{q,t}(x,E)=1,\text{ for }\mathcal{P}_{\mu,\nu}^{q,t}-a.a.x\in E,
$$
which by the hypothesis $\mathcal{H}_{\mu,\nu}^{q,t}=\mathcal{P}_{\mu,\nu}^{q,t}$ yields that
$$
{\underline{D}}_{\mu,\nu}^{q,t}(x,E)=1,\text{ for }\mathcal{H}_{\mu,\nu}^{q,t}-a.a.x\in E.
$$
So, assertion \textbf{b.} is proved.\\
$b)\Rightarrow c).$ Using assertion \textbf{b.}, Theorem \ref{mixeddensityestimatestheorem1} (and Remark \ref{remark1}), it holds, for any ball $B(x,r)$, that
$$
\mathcal{H}_{\mu,\nu}^{q,t}(B(x,r))=\mathcal{P}_{\mu,\nu}^{q,t}(B(x,r)).
$$
Again, using assertion \textbf{b.} above, we get
$$
\mathcal{H}_{\mu,\nu}^{q,t}(B(x,r))\sim\Gamma_{\mu,\nu}^{q,t}(B(x,r)),\;\;\hbox{for}\;\;\mathcal{H}_{\mu,\nu}^{q,t}-a.a.x\in E.
$$
Consequently,
$$
\mathcal{P}_{\mu,\nu}^{q,t}(B(x,r))\sim\Gamma_{\mu,\nu}^{q,t}(B(x,r)),\;\;\hbox{for}\;\;\mathcal{P}_{\mu,\nu}^{q,t}-a.a.x\in E.
$$
Hence,
$$
{\underline{\Delta}}_{\mu,\nu}^{q,t}(x,E)={\overline{\Delta}}_{\mu,\nu}^{q,t}(x,E)=1,\text{ for }\mathcal{P}_{\mu,\nu}^{q,t}-a.a.x\in E.
$$
So as assertion \textbf{c.}\\
$c)\Rightarrow a).$ Applying Theorem \ref{mixeddensityestimatestheorem1} and Remark \ref{remark1}, we get for all Borel set $F\subset E$,
$$
\mathcal{H}_{\mu,\nu}^{q,t}(F)\leq\mathcal{H}_{\mu,\nu}^{q,t}(F)\leq\mathcal{P}_{\mu,\nu}^{q,t}(F).
$$
Consequently,
$$
\mathcal{P}_{\mu,\nu}^{q,t}=\mathcal{H}_{\mu,\nu}^{q,t}.
$$
4) is an immediate consequence of assertions \textbf{1.}, \textbf{2.} and \textbf{3.}
\subsection{Proof of Theorem \ref{theorem3}}
We develop here-after the proof of assertion \textbf{1.a.} The remaining assertions may be proved by following similar techniques.\\
\textbf{1.a.} We claim that for all $\mathcal{H}_{\mu,\nu}^{q,t}$-measurable set $F\subset E$, we have
\begin{equation}\label{proofthm3eq1}
{\overline{D}}_{\mu,\nu}^{q,t}(x,E)={\overline{D}}_{\mu,\nu}^{q,t}(x,F)\text{ and }{\underline{D}}_{\mu,\nu}^{q,t}(x,E)={\underline{D}}_{\mu,\nu}^{q,t}(x,F);\,\mathcal{H}_{\mu,\nu}^{q,t}-a.e\text{ on }F.
\end{equation}
Indeed, denote as in \cite{Attiaetal}, \cite{Cole-Olsen}, for $\theta\in P(\mathbb{R}^{d})$,
$$
\theta_{E}(A)=\theta(E\cap A)\;\mbox{and}\;\lambda_E(A)=\theta(A\cap E^c),\;\text{ for all Borel set }A.
$$
It is straightforward that
$$
{\overline{d}}_{\mu,\nu}^{q,t}(x,\theta_{E})\leq{\overline{d}}_{\mu,\nu}^{q,t}(x,\theta)
$$
and
$$
{\underline{d}}_{\mu,\nu}^{q,t}(x,\theta_{E})\leq{\underline{d}}_{\mu,\nu}^{q,t}(x,\theta).
$$
On the other hand, we have
$$
{\underline{d}}_{\mu,\nu}^{q,t}(x,\theta)
\leq{\underline{d}}_{\mu,\nu}^{q,t}(x,\theta_{E})+{\overline{d}}_{\mu,\nu}^{q,t}(x,\lambda_E)
$$
and
$$
{\overline{d}}_{\mu,\nu}^{q,t}(x,\theta)
\leq{\overline{d}}_{\mu,\nu}^{q,t}(x,\theta_{E})+{\overline{d}}_{\mu,\nu}^{q,t}(x,\lambda_E).
$$
We now claim that
\begin{equation}\label{proofthm3eq2}
{\overline{d}}_{\mu,\nu}^{q,t}(x,\lambda_E)=0.
\end{equation}
Indeed, the set $G=\{x;\;{\overline{d}}_{\mu,\nu}^{q,t}(x,\lambda_E)\not=0\}$ is a countable union of
$$
G_{k}=\{\ x\in E;\text{ }{\overline{d}}_{\mu,\nu}^{q,t}(x,\lambda )\geq\frac{1}{k}\},\;\;k\geq1.
$$
From Theorem \ref{mixeddensityestimatestheorem1} (and Remark \ref{remark1}), we get
$$
\lambda_E(G_{k})\geq\frac{1}{k}\mathcal{H}_{\mu,\nu}^{q,t}(G_{k}).
$$
Consequently,
$$
\mathcal{H}_{\mu,\nu}^{q,t}(G_{k})=0,\;\forall k,
$$
and thus
$$
\mathcal{H}_{\mu,\nu}^{q,t}(G)=0.
$$
Therefore,
$$
{\overline{d}}_{\mu,\nu}^{q,t}(x,\lambda_E)=0\text{ for }\mathcal{H}_{\mu,\nu}^{q,t}-a.\text{ }a\text{ on }E,
$$
which leads to (\ref{proofthm3eq2}). As a result, we get
$$
{\underline{d}}_{\mu,\nu}^{q,t}(x,\theta)
\leq{\underline{d}}_{\mu,\nu}^{q,t}(x,\theta_{E})
$$
and
$$
{\overline{d}}_{\mu,\nu}^{q,t}(x,\theta)
\leq{\overline{d}}_{\mu,\nu}^{q,t}(x,\theta_{E}).
$$
These estimations together yield claim (\ref{proofthm3eq1}).
\section{Conclusion}
In the present paper, a class of quasi Ahlfors vector-valued measures has been applied for the estimation of mixed multifractal densities. We served of a mixed multifractal analysis developed relatively to such a class to show that the computation of the fractal dimension is always possible when estimating well the mixed multifractal densities. Even-though the present work is more theoretical, however, many practical cases may be addressed. Indeed, such densities have been applied for many time in the understanding of many phenomena, such as climate factors distributions in different regions, crashes propagation and the effect on markets, etc. See \cite{Abry,Chen,Figueiredo,Garcia,Pavon,Scheuring}. These applications have been tackled by using the single multifractal analysis and densities. However, in the recent decades there has been a 'proof' of the fact that taking simultaneously many phenomena and/or many regions for the same task may induce well understanding. This may be investigated by means of the mixed analysis.  

In a pure mathematical point of view, a new framework of mixed multifractal analysis for vector-valued measures has been developed in \cite{Menceuretal1}, where a general context has been considered, in which the measures considered are not necessary Gibbs, nor doubling, nor quasi-Ahlfors, but controlled by an extra gauge function to prove the validity of the multifractal formalism. An eventual extension of the present work is to combine with \cite{Menceuretal1} for more general classes of multifractal densities. 
\section{Appendix - The quasi-Ahlfors regularity}
Remark that whenever the measure $\nu$ is quasi-Ahlfors regular with index $\alpha$, we get for $t\geq0$,
$$
\mathcal{H}_{\mu,\nu}^{q,t}\leq M^t\mathcal{H}_{\mu}^{q,\alpha t}.
$$
Henceforth, for $\alpha t>max(0,dim_\mu^q)$, we get
$$
t\geq dim_{\mu,\nu}^q,
$$
which yields that
$$
\left(dim_\mu^q\right)_+\geq\alpha\,dim_{\mu,\nu}^q.
$$
Similarly, we get
$$
\left(dim_\mu^q\right)_-\leq\alpha\,dim_{\mu,\nu}^q.
$$
As a result, we obtain an analogous of Billingsley theorem as
\begin{equation}\label{analogueBillingsley}
\left(dim_\mu^q\right)_-\leq\alpha\,dim_{\mu,\nu}^q\leq
\left(dim_\mu^q\right)_+.
\end{equation}
Whenever $\nu$ is $\alpha$-Ahlfors regular we get a mixed version of Billingsley theorem
\begin{equation}\label{BillingsleyMixed}
dim_\mu^q=\alpha\,dim_{\mu,\nu}^q.
\end{equation}


\begin{thebibliography}{99}
\bibitem{Abry} P. Abry, H. Wendt and G. Didier, Detecting and estimating multivariate self-similar sources in high-dimensional noisy mixtures. 2018 IEEE Workshop on Statistical Signal Processing (SSP), 2017, pp. 688-692.

\bibitem{Attiaetal} N. Attia, B. Selmi and C. Souissi, Some density results of relative multifractal analysis. Chaos, Solitons and Fractals 103 (2017), pp. 1--11.

\bibitem{Attia-Selmi1} N. Attia and B. Selmi, Relative Multifractal Box-Dimensions. Filomat 33:9 (2019), 2841–2859. DOI: 10.2298/FIL1909841A.

\bibitem{Bmabrouk1} A. Ben Mabrouk, A note on Hausdorff and packing measures, Interna. J. Math. Sci., 8(3-4) (2009), pp. 135-142.

\bibitem{Bmabrouk3} A. Ben Mabrouk, A higher order multifractal formalism, Stat. Prob. Lett. 78 (2008), pp. 1412-1421.

\bibitem{Billingsley} P. Billingsley, Ergodic Theory and Information. John Wiley \& Sons, Inc., New York, London, Sydney 1965.

\bibitem{Chen} G. Chen and Q. Cheng, Fractal density modeling of crustal heterogeneity from the KTB deep hole, J. Geophys. Res. Solid Earth, 122, (2017), 1919–1933, doi:10.1002/2016JB013684.

\bibitem{Cole} J. Cole, Relative multifractal analysis. Choas Solitons Fractals 11 (2000), pp. 2233--2250.

\bibitem{Cole-Olsen} J. Cole and L. Olsen, Multifractal variation measures and multifractal density theorems. Real Anal Exch 28 (2003), pp. 501--514.

\bibitem{Combrexelle} S. Combrexelle, Multifractal analysis for multivariate data with application to remote sensing. Th\`ese de doctorat de l'universit\'e de Toulouse, Sp\'ecialit\'e: signal, image, acoustique et optimisation, 2016.

\bibitem{Dai} M. Dai, J. Hou, J. Gao, W. Su, L. Xi and D. Ye, Mixed multifractal analysis of China and US stock index series. Chaos, Solitons and Fractals 87 (2016) 268–275.

\bibitem{Das} M. Das, Hausdorff measures, dimensions and mutual singularity. Trans Am Math Soc 357 (2005), pp. 4249--4268.

\bibitem{Douzi-Selmi} Z. Douzi and B. Selmi, Multifractal variation for projections of measures. Chaos Solitons Fractals 91 (2016), pp. 414--420 .

\bibitem{DouziChaos2019} Z. Douzi and B. Selmi, Regularities of general Hausdorffand packing functions. Chaos, Solitons and Fractals 123 (2019) 240–243.

\bibitem{DouziATA} Z. Douzi and B. Selmi, On the Projections of the Mutual Multifractal R\'enyi Dimensions. Anal. Theory Appl., Vol. 36, No. 2 (2020), pp. 1-20. DOI: 10.4208/ata.OA-2017-0036.

\bibitem{DouziERA} Z. Douzi and B. Selmi, On the mutual singularity of multifractal measures. Electronic Research Archive. DOI: 10.3934/era.2020024
volume 28 number 1, march 2020 pages 423-432.

\bibitem{DouziSamtiSelmi2021} Z. Douzi, A. Samti and B. Selmi, Another example of the mutual singularity of multifractal measures. Proyeccinones (Antofagasta. On line) 40(1) (2021), pp. 17-23.

\bibitem{Edgar} G. A. Edgar, Centered densities and fractal measures. New York J Math 13 (2007), pp. 33--87.

\bibitem{Falconer} K. J. Falconer, The Geometry of Fractal Sets, Cambridge Tracts in Mathematics, No 85, Cambridge Univ. Press, New York/London, 1985.

\bibitem{Farhatetal} A. Farhat and A. Ben Mabrouk, Mixed Generalized Multifractal Densities for vector-valued Quasi-Ahlfors Measures, (2018), viXra:1804.0405.

\bibitem{Farhatetal1} A. Farhat and A. Ben Mabrouk, Mixed Generalized Multifractal Densities for vector-valued Quasi-Ahlfors Measures, (2018), viXra:1804.0405.

\bibitem{Figueiredo} B. C. L. de Figueiredo, G. R. Moreira, B. Stosic and T. Stosic, multifractal analysis of hourly wind speed records in petrolina, northeast brazil. Rev. Bras. Biom., Sao Paulo, 32(4) (2014), p.599-608, .

\bibitem{Ganetal} G. Gan, C. Ma and J. Wu, Data Clustering Theory, Algorithms, and Applications. ASA-SIAM Series on Statistics and Applied Probability, SIAM, Philadelphia, ASA, Alexandria, VA, 2007.

\bibitem{Garcia} A. P. Garcia-Marin, J. Est\'evez, F. J. Jim\'enez-Hornero, and J. L. Ayuso-Munoz, Multifractal analysis of validated wind speed time series. Chaos: An Interdisciplinary Journal of Nonlinear Science 23, 013133 (2013); doi: 10.1063/1.4793781

\bibitem{Jaerisch} J. Jaerisch and H. Takahasi, Mixed multifractal spectra of Birkhoff averages for non-uniformly expanding one-dimensional Markov maps with countably many branches. arXiv:2004.04347v2 [math.DS] 29 Jun 2020. 38 pages.

\bibitem{Jarvenpaaetal} E. Jarvenpaa, M. Jarvenpaa, A. Kaenmaki, T. Rajala, S. Rogovin and V. Suomala, Packing Dimension and Ahlfors Regularity of Porus Sets in Metric Spaces. arXiv:1701.08593v1 [math.CA] 30 Jan 2017

\bibitem{Hong} L. Hong, Y. Tie and W. Lanlan, Network Traffic Prediction Based on Multifractal MLD Model. 2010 International Workshop on Chaos-Fractal Theory and its Applications. IEEE Computer Society, 2010, pp. 466-470. DOI: 10.1109/IWCFTA.2010.109.

\bibitem{Khlifi} M. Khelifi, H. Lotfi, A. Samti and B. Selmi, A relative multifractal analysis. Chaos, Solitons and Fractals 140 (2020) 110091.

\bibitem{Kinnison} A. Kinnison and P. Mörters, Simultaneous Multifractal Analysis of the Branching and Visibility Measure on a Galton-Watson Tree. Advances in Applied Probability, 42(1) (2010), 226-245. DOI:10.1239/aap/1269611151.

\bibitem{Maneveauetal} C. Meneveau, K. R. Sreenivasan, P. Kailasnath, and M. S. Fan, Joint multifractal measures: Theory and applications to turbulence. Physical Review A 41(2) (1990), pp. 894--913.

\bibitem{Mattila} P. Mattila, Geometry of sets and measures in euclidian spaces: fractals and rectifiability. Cambridge University Press; 1995 .

\bibitem{MattilaSaaramen} P. Mattila and P. Saaramen, Ahlfors-David regular sets and bilipschitz maps. arXiv:0809.4877v1 [math.MG] 29 Sep 2008.

\bibitem{Menceuretal} M. Menceur, A. Ben Mabrouk and K. Betina, The Multifractal Formalism For Measures, Review and Extension to Mixed Cases. Anal. Theory Appl., Vol. 32, No. 1 (2016), pp. 77-106.

\bibitem{Menceuretal1} M. Menceur and A. Ben Mabrouk, A mixed multifractal formalism for finitely many non Gibbs Frostman-like measures, (2018), 19 pages, ArXiv:1804.09034v1.

\bibitem{Olsen1} L. Olsen, A multifractal formalism, Adv. Math., 116 (1995), pp. 82--196.

\bibitem{Olsen2} L. Olsen, Multifractal dimensions of product measures. Math Proc Camb Phil Soc 120 (1996), pp. 709--34.

\bibitem{Olsen3} L. Olsen, Mixed generalized dimensions of self-similar measures, J. Math. Anal. and Appl., 306 (2005), pp. 516-539.

\bibitem{Pajot} H. Pajot, Sous-ensembles de courbes Ahlfors-r\'eguli\`eres et nombres de Jones. Publicacions Matematiques, 40 (1996), pp. 497--526.

\bibitem{Pavon} P. Pavon-ominguez, F. J. Jimenez-Hornero, E. Gutierrez-de-Rave, Multifractal analysis of ground–level ozone concentrations at urban, suburban and rural background monitoring sites in Southwestern Iberian Peninsula. Atmospheric Pollution Research 4 (2013), 229-237.

\bibitem{Pesin-Weiss} Y. B. Pesin, Dimension Theory in Dynamical Systems. Contemporary Views and Applications (Chicago Lectures in Mathematics) (Chicago, IL: University of Chicago Press), 1997.

\bibitem{Qu} C. Qu, The Lower Densities of Symmetric Perfect Sets, Anal. Theory Appl., 28 (2012), pp. 377-384.

\bibitem{Scheuring} I. Scheuring and R. H. Riedi, Application of multifractals to the analysis of vegetation pattern.Journal of Vegetation Science 5: 489-496, 1994.

\bibitem{SelmiAMP2019} B. Selmi, On the strong regularity with  the multifractal measures in a probability space. Analysis and Mathematical Physics (2019) 9:1525–1534. doi: 10.1007/s13324-018-0261-5.

\bibitem{SelmiProy2021} B. Selmi, Remarks on the mutual singularity of multifractal measures. Proyeccinones (Antofagasta. On line) 40(1) (2021), pp. 73-84.

\bibitem{SelmiProyActa} B. Selmi, The relative multifractal analysis, review and examples. Acta Sci. Math. (Szeged) 86 (2020), 635–666. DOI: 10.14232/actasm-020-801-8. 

\bibitem{Taylor} C. M. Taylor and A. Salhi, On Partitioning Multivariate Self-Affine Time Series. IEEE Transactions on Evolutionary Computation, vol. 21, no. 6, pp. 845-862, Dec. 2017. DOI: 10.1109/TEVC.2017.2688521.

\bibitem{Xu-Wang} M. Xu and S. Wang, The Boundedness of Bilinear Singular Integral Operators on Sierpinski Gaskets, Anal. Theory Appl., 27 (2011), pp. 92-100.

\bibitem{Xu-Xu} S. Xu and W. Xu, Note on the Paper "An Negative Answer to a Conjecture on the Self-similar Sets Satisfying the Open Set Condition", Anal. Theory Appl., 28 (2012), pp. 49-57.

\bibitem{Xu-Xu-Zhong} S. Xu, W. Xu and D. Zhong, Some New Iterated Function Systems Consisting of Generalized Contractive Mappings, Anal. Theory Appl., 28 (2012), pp. 269-277.

\bibitem{Yuan} Y. Yuan, Spectral Self-Affine Measures on the Generalized Three Sierpinski Gasket, Anal. Theory Appl., 31 (2015), pp. 394-406.

\bibitem{Ye1} Y.-L. Ye, Self-similar vector-valued measures Adv. Appl. Math. 38 (2007): 71..96.

\bibitem{Ye2} Y.-L. Ye, Multifractal analysis of non-uniformly contracting iterated function systems. Nonlinearity 30 (2017) 1708--1733.

\bibitem{Zeng-Yuan-Xui} C. Zeng, D. Yuan and S. Xui, The Hausdorff Measure of Sierpinski Carpets Basing on Regular Pentagon, Anal. Theory Appl., 28 (2012), pp. 27-37.

\bibitem{Zhou-Feng} Z. Zhou and L. Feng, A Theoretical Framework for the Calculation of Hausdorff Measure Self-similal Set Satisfying OSC, Anal. Theory Appl., 27 (2011), pp. 387-398.

\bibitem{Zhu-Zhou} Z. Zhu and Z. Zhou, A Local Property of Hausdorff Centered Measure of Self-Similar Sets, Anal. Theory Appl., 30 (2014), pp. 164-172.
\end{thebibliography}
\end{document}